\renewcommand{\leq}{\leqslant}
\renewcommand{\geq}{\geqslant}
\numberwithin{equation}{section}
\newcommand{\Cc}{\mathbf{C}}
\newcommand{\Rr}{\mathbf{R}}
\newcommand{\proba}{\mathbf{P}}
\newcommand{\expect}{\mathbf{E}}
\newcommand{\charfun}{\mathbf{1}}
\newcommand{\hol}{\mathcal{H}}
\newcommand{\ra}{\rightarrow}
\newcommand{\lra}{\longrightarrow}
\DeclareMathOperator{\Reel}{Re}
\DeclareMathOperator{\Tr}{Tr}
\newcommand{\eps}{\varepsilon}
\renewcommand{\rho}{\varrho}
\DeclareMathOperator{\SU}{SU}
\DeclareMathOperator{\SUhat}{\widehat{SU}_2}
\newcommand{\demi}{{\textstyle{\frac{1}{2}}}}
\DeclareMathSymbol{\gena}{\mathord}{letters}{"3C}
\DeclareMathSymbol{\genb}{\mathord}{letters}{"3E}
\def\sumb{\mathop{\sum \Bigl.^{\flat}}\limits}
\theoremstyle{plain}
\newtheorem{theorem}{Theorem}[section]
\newtheorem*{theorem*}{Theorem}
\newtheorem{lemma}[theorem]{Lemma}
\newtheorem{proposition}[theorem]{Proposition}
\theoremstyle{remark}
\theoremstyle{definition}
\newtheorem{remark}[theorem]{Remark}
\newcommand{\A}[1]{\mathsf{{#1}}}
\renewcommand{\geq}{\geqslant}
\renewcommand{\leq}{\leqslant}
\begin{document}

\title{Bagchi's Theorem for families of automorphic forms}

\author{E. Kowalski}
\address{ETH Zürich -- D-MATH\\
  Rämistrasse 101\\
  8092 Zürich, Switzerland} \email{kowalski@math.ethz.ch}
\thanks{Partially supported by a DFG-SNF lead agency program grant
  (grant 200021L\_153647).}

%\subjclass{}
\keywords{Modular forms, $L$-functions, Bagchi's Theorem, Voronin's
  Theorem, random Dirichlet series}

\begin{abstract}
  We prove a version of Bagchi's Theorem and of Voronin's Universality
  Theorem for family of primitive cusp forms of weight $2$ and prime
  level, and discuss under which conditions the argument will apply to
  general reasonable family of automorphic $L$-functions.
\end{abstract}

\maketitle

\section{Introduction}

The first ``universality theorem'' for Dirichlet series is Voronin's
Theorem~\cite{voronin} for the Riemann zeta function, which states
that for any $r<1/4$, any continuous function $\varphi$ defined and
non-vanishing on the disc $|s-3/4|\leq r$ in $\Cc$, which is
holomorphic in the interior, and any $\eps>0$, there exists $t\in\Rr$
such that
$$
\max_{|s-3/4|\leq r}{|\zeta(s+it)-\varphi(s)|}<\eps.
$$
In other words, up to arbitrary precision, any function $\varphi$ can
be approximated by some vertical translate of the Riemann zeta
function.
\par
Bagchi, in his thesis~\cite{bagchi}, provided a clear conceptual
explanation of this result, as the combination of two independent
statements:
\begin{itemize}
\item Viewing translates of the Riemann zeta function by $t\in [-T,T]$
  as random variables with values in a space of holomorphic function
  on the disc, Bagchi proves that these random variables converge in
  law, as $T\to +\infty$, to a natural random Dirichlet series, which
  is also expressed as a random Euler product;
\item Computing the support of the limiting random Dirichlet series,
  and checking that it contains the space of nowhere vanishing
  holomorphic functions on the disc, the universality theorem follows
  easily.
\end{itemize}

The key step, from our point of view, is the first part, which we call
\emph{Bagchi's Theorem}. Indeed, once the convergence in law is known,
it follows that there is ``some'' universality statement, with respect
to the functions in the support of the limiting random Dirichlet
series. The second step makes this support explicit. (This might be
compared with Deligne's Equidistribution Theorem, as applied to
families of exponential sums for instance: Deligne's Theorem shows
that there is always \emph{some} equidistribution of these sums.)
\par
The goal of this note is to give a first example of a genuinely
higher-degree statement of this type, and to deduce the corresponding
universality statement. We will also indicate a general principle that
should apply in many more cases.

\begin{theorem}[Universality in level aspect]\label{th-gl2}
  For $q$ prime $\geq 17$, let $S_2(q)^*$ be the non-empty\footnote{
    We assume $q\geq 17$ to ensure this property; it also holds for
    $q=11$.} finite set of primitive cusp forms for $\Gamma_0(q)$ with
  weight $2$ and trivial nebentypus. For $f\in S_2(q)^*$, let $L(f,s)$
  denote its Hecke $L$-function
$$
L(f,s)=\sum_{n\geq 1}\lambda_f(n)n^{-s},
$$
normalized so that the critical line is $\Reel(s)=\demi$.
\par
For any real number $r<\tfrac{1}{4}$, let $D$ be the open disc
centered at $3/4$ with radius $r$.  Then for any continuous function
$\varphi\,:\, \bar{D}\ra \Cc$ which is holomorphic and non-vanishing
in $D$ and satisfies
\begin{equation}\label{eq-condition}
\varphi(\sigma)>0\text{ for }\sigma\in D\cap \Rr,
\end{equation}
we have
$$
\liminf_{q\ra +\infty} \frac{1}{|S_2(q)^*|}|\{f\in S_2(q)^*\,\mid\,
\|L(f,\cdot)-\varphi\|_{\infty}<\eps \}|>0
$$
for any $\eps>0$, where the $L^{\infty}$ norm is the norm on
$\bar{D}$.
\end{theorem}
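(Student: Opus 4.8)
The plan is to follow Bagchi's two-step argument in the level aspect. Fix $\rho$ with $r<\tfrac14$, say $r<\rho<\tfrac14$; put $\Omega=\{s:|s-\tfrac34|<\rho\}$, so that $\Reel(s)>\tfrac12$ on $\Omega$, and let $H(\Omega)$ be the space of holomorphic functions on $\Omega$ with the topology of local uniform convergence. On the finite probability space $S_2(q)^*$ with the uniform measure, regard $f\mapsto L(f,\cdot)|_\Omega$ as an $H(\Omega)$-valued random variable. Let $(\theta_p)_p$ be independent with $\theta_p$ distributed according to the $p$-adic Plancherel (``vertical Sato--Tate'') measure $\mu_p$ on $[0,\pi]$, and set
$$
\mathbf{L}(s)=\prod_p\bigl(1-e^{i\theta_p}p^{-s}\bigr)^{-1}\bigl(1-e^{-i\theta_p}p^{-s}\bigr)^{-1}.
$$
Taking a logarithm, the $p$-th term has summable mean and variance for $\Reel(s)>\tfrac12$, so by Kolmogorov's three-series theorem $\mathbf{L}$ is almost surely a well-defined, real-symmetric, nowhere-vanishing holomorphic function on $\{\Reel(s)>\tfrac12\}$. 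The two steps are: \emph{(I)} (Bagchi's Theorem) $L(f,\cdot)\Rightarrow\mathbf{L}$ in $H(\Omega)$ as $q\to+\infty$; and \emph{(II)} (support) the support of the law of $\mathbf{L}$ in $H(\Omega)$ equals $\{g\in H(\Omega):g\text{ real-symmetric and nowhere zero on }\Omega\}\cup\{0\}$.

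\textbf{Deduction of the theorem from (I) and (II).} Since $\varphi$ is holomorphic on $D$ and real on $D\cap\Rr$, it is automatically real-symmetric on $D$. For $t<1$ let $\psi_t(s)=\varphi(t(s-\tfrac34)+\tfrac34)$; it is holomorphic, nowhere-vanishing and real-symmetric on the disc of radius $r/t>r$, and $\|\psi_t-\varphi\|_{\infty,\bar D}\to0$ as $t\to1^-$ by uniform continuity of $\varphi$ on $\bar D$. Since (I) and (II) hold for every admissible $\rho$, choose $\rho$ close enough to $r$ that one may fix such a $t$ with in addition $r/t>\rho$ and $\|\psi_t-\varphi\|_{\infty,\bar D}<\eps/2$; then $\psi_t|_\Omega$ belongs to the support in (II), so $\delta:=\proba(\|\mathbf{L}-\psi_t\|_{\infty,\bar D}<\eps/2)>0$. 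Applying (I) and the portmanteau theorem to the open set $\{g\in H(\Omega):\|g-\psi_t\|_{\infty,\bar D}<\eps/2\}$ gives
$$
\liminf_{q\to+\infty}\frac{1}{|S_2(q)^*|}\,\bigl|\{f\in S_2(q)^*:\|L(f,\cdot)-\psi_t\|_{\infty,\bar D}<\eps/2\}\bigr|\ \geq\ \delta>0,
$$
and the triangle inequality then yields the asserted lower bound for the count of $f$ with $\|L(f,\cdot)-\varphi\|_{\infty,\bar D}<\eps$.

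\textbf{Proof of (I), and the main difficulty.} Use the truncated Euler products $L^{(X)}(f,s)=\prod_{p\leq X}(1-\lambda_f(p)p^{-s}+p^{-2s})^{-1}$ and their limits $\mathbf{L}^{(X)}=\prod_{p\leq X}(1-e^{i\theta_p}p^{-s})^{-1}(1-e^{-i\theta_p}p^{-s})^{-1}$. For fixed $X$ the vertical Sato--Tate equidistribution of Hecke eigenvalues (Serre, Conrey--Duke--Farmer; obtained from the Eichler--Selberg trace formula, which carries the natural weights) gives $(\theta_{f,p})_{p\leq X}\Rightarrow\bigotimes_{p\leq X}\mu_p$, hence $L^{(X)}(f,\cdot)\Rightarrow\mathbf{L}^{(X)}$ by the continuous mapping theorem, while $\mathbf{L}^{(X)}\to\mathbf{L}$ almost surely. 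By the standard approximation lemma for weak convergence (Billingsley) together with Cauchy's formula, (I) reduces to the mean-square estimate: for every $\rho''$ with $r<\rho''<\rho$,
$$
\lim_{X\to+\infty}\ \limsup_{q\to+\infty}\ \frac{1}{|S_2(q)^*|}\sum_{f\in S_2(q)^*}\ \int_{|s-3/4|=\rho''}\bigl|L(f,s)-L^{(X)}(f,s)\bigr|^2\,|ds|\ =\ 0 .
$$
Here $\Reel(s)>\tfrac12$ on the circle of integration. Interchanging sum and integral, expanding the square, replacing $L(f,s)$ by a smoothed approximate functional equation (its dual sum being negligible in mean square precisely because $\Reel(s)>\tfrac12$), and writing $L^{(X)}(f,s)=\sum_{n\ X\text{-smooth}}\lambda_f(n)n^{-s}$, one is reduced to evaluating three averages over the family of products $\lambda_f(m)\lambda_f(n)$. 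These are handled by Petersson's formula (after removing the harmonic weights by the usual techniques) or directly by the trace formula via $\lambda_f(m)\lambda_f(n)=\sum_{d\mid(m,n)}\lambda_f(mn/d^2)$; each converges as $q\to+\infty$ to the corresponding second moment of $\mathbf{L}$ and $\mathbf{L}^{(X)}$, so the double limit is $\expect|\mathbf{L}(s)-\mathbf{L}^{(X)}(s)|^2\to0$ as $X\to+\infty$ by dominated convergence (a maximal inequality supplies the dominating function). Carrying out this estimate — simultaneously controlling the approximate functional equation, the main and error terms of the trace formula, the passage from harmonic to natural weights, and uniformity in $s$ — is the main obstacle; the remaining parts of (I) are soft.

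\textbf{Proof of (II).} Write $\log\mathbf{L}(s)=\sum_p\bigl(2\cos\theta_p\cdot p^{-s}+r_p(\theta_p,s)\bigr)$ with $\sum_p\|r_p\|<\infty$ on compacts of $\Omega$. As $\mu_p$ has full support on $[0,\pi]$, the $p$-th summand ranges over the real segment $\{t\,p^{-s}:|t|\leq2\}$ inside the real-symmetric subspace $H_{\Rr}(\Omega)$; moreover $\sum_p\|p^{-s}\|^2<\infty$ while $\sum_p\|p^{-s}\|=+\infty$ on $\Omega$ (since $\tfrac12<\Reel(s)<1$ there), and the real linear span of $\{p^{-s}\}_p$ is dense in $H_{\Rr}(\Omega)$ by a classical argument resting on $\sum_p 1/p=+\infty$. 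A rearrangement argument in this Hilbert-space setting, as in Bagchi's thesis \cite{bagchi}, then shows the support of the law of $\log\mathbf{L}$ is all of $H_{\Rr}(\Omega)$; exponentiating and invoking Hurwitz's theorem gives (II). (The hypotheses that $\varphi>0$ on $D\cap\Rr$ and $\varphi$ is non-vanishing on $D$ are exactly the constraints $\varphi$ must satisfy to lie in this support.)
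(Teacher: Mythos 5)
Your overall architecture is the correct one and parallels the paper's: Bagchi's two steps (convergence in law to a random Euler product, then a support computation) glued together by the portmanteau theorem. Two choices genuinely diverge from the paper. You work with the uniform measure on $S_2(q)^*$ and the $p$-adic Plancherel measures throughout; the paper instead uses the harmonic weights $1/\langle f,f\rangle$, so that its limit object is the $\SU_2(\Cc)$-Haar (Sato--Tate) Euler product with i.i.d.\ factors, and it converts back to natural density only at the very end via the Cogdell--Michel distribution of $L(\mathrm{Sym}^2f,1)$. The paper's Remark explicitly endorses your variant; it buys a cleaner final deduction at the price of non-identically distributed factors and of having to prove the moment estimates with natural rather than harmonic weights. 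Your dilation $\psi_t$ to replace $\varphi$ by a function holomorphic on a larger disc is fine (the paper avoids it by computing the support directly in the Banach space $\hol(D)$ of functions continuous on $\bar D$). One correction to your (II): the support is \emph{not} all real-symmetric nowhere-vanishing functions together with $0$. Exponentials of real-symmetric functions are positive on $D\cap\Rr$, and positivity there persists under locally uniform limits of zero-free functions (Hurwitz), so for instance the constant $-1$ is excluded; the correct description requires $\varphi(\sigma)>0$ on $D\cap\Rr$. Your deduction only uses the inclusion for functions positive on the real segment, so nothing downstream breaks, but the equality as stated is false.

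The genuine gap is the one you flag yourself: the estimate
$$
\lim_{X\to+\infty}\ \limsup_{q\to+\infty}\ \frac{1}{|S_2(q)^*|}\sum_{f\in S_2(q)^*}\int_{|s-3/4|=\rho''}\bigl|L(f,s)-L^{(X)}(f,s)\bigr|^2\,|ds|\ =\ 0
$$
is asserted, not proved, and it carries the entire analytic content of Step (I). As you have set it up, it demands an asymptotic evaluation (not merely an upper bound) of the second moment of $L(f,s)$ off the critical line with uniform weights, uniformly in $s$ on the contour, plus the cross terms against the tails of the truncated Euler products; each ingredient involves the approximate functional equation, the error terms of the Petersson/trace formula, and the removal of harmonic weights inside a second moment. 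The paper circumvents all of this: it approximates $L(f,\cdot)$ by \emph{smoothed Dirichlet partial sums} $L^{(N)}(f,s)=\sum_n\lambda_f(n)\varphi(n/N)n^{-s}$, writes the difference as the contour integral $-\tfrac{1}{2i\pi}\int_{(-\delta)}L(f,s+w)\hat{\varphi}(w)N^w\,dw$, and then needs only the first-moment bound $\expect_q(|L(f,s)|)\ll(1+|s|)^A$ for $\Reel(s)\geq\tfrac12+\delta$ --- available at once from Cauchy--Schwarz and the second-moment estimate of Kowalski--Michel --- to conclude $\expect_q(\|\A{L}_q-\A{L}_q^{(N)}\|_\infty)\ll N^{-\delta}$, with a matching bound for the random model. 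To complete your argument you must either carry out the second-moment asymptotic in full, or switch to the partial-sum/contour-shift device, which reduces the requirement to an off-the-shelf upper bound.
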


The main difference with previous results involving cusp forms (the
first one being due to Laurin\v cikas and Matsumoto~\cite{lm}) is that
we do not fix one such $L$-function $L(f,s)$ and consider shifts (or
twists) $L(f,s+it)$ or $L(f\times \chi,s)$, but rather we average over
the discrete family of primitive forms in $S_2(q)^*$. It is also
important to remark that the condition~(\ref{eq-condition}) is
necessary for a function on $D$ to be approximated by $L$-functions
$L(f,s)$ with $f\in S_2(q)^*$. (We will give more general statements
where the discs $D$ are replaced with more general compact sets in the
strip $\demi<\sigma<1$).

We will prove this Theorem in Section~\ref{sec-sketch}, after stating
the results generalizing the two steps of Bagchi's strategy for the
zeta function. The proof of Bagchi's Theorem for this family is an
analogue of a proof for the Riemann zeta function that is simpler than
Bagchi's proof (it avoids both the use of the ergodic theorem and any
tightness or weak-compactness argument).
\par
In Section~\ref{sec-general}, we discuss very briefly how this
strategy can in principle be applied to very general families of
$L$-functions, as defined in~\cite{families}.

\subsection*{Acknowledgements.} 

The simple proof of Bagchi's Theorem for the Riemann zeta funtion,
that we generalize here to modular forms, was found during a course on
probabilistic number theory at ETH Zürich in the Fall Semester 2015;
thanks are due to the students who attended this course for their
interest and remarks, and to B. Löffel for assisting with the
exercises (see~\cite{proba} for a draft of the lecture notes for this
course, especially Chapter~3).
\par
Thanks to the referee for carefully reading the text, and in
particular for pointing out a number of confusing mistakes in
references to the literature.

\subsection*{Notation.}

As usual, $|X|$ denotes the cardinality of a set.  By $f\ll g$ for
$x\in X$, or $f=O(g)$ for $x\in X$, where $X$ is an arbitrary set on
which $f$ is defined, we mean synonymously that there exists a
constant $C\geq 0$ such that $|f(x)|\leq Cg(x)$ for all $x\in X$. The
``implied constant'' is any admissible value of $C$. It may depend on
the set $X$ which is always specified or clear in context. We write
$f\asymp g$ if $f\ll g$ and $g\ll f$ are both true.
\par
We use standard probabilistic terminology: a probability space
$(\Omega,\Sigma,\proba)$ is a triple made of a set $\Omega$ with a
$\sigma$-algebra and a measure $\proba$ on $\Sigma$ with
$\proba(\Omega)=1$. We denote by $\expect(X)$ the expectation on
$\Omega$. The law of a random variable $X$ is the measure $\nu$ on the
target space of $X$ defined by $\nu(A)=\proba(X\in A)$. If
$A\subset \Omega$, then $\charfun_{A}$ is the characteristic function
of $A$.

\section{Equidistribution and universality for modular forms in the
  level aspect}
\label{sec-sketch}

We will prove Theorem~\ref{th-gl2} by combining the results of the
following two steps, each of which will be proved in a forthcoming
section.  Throughout, we assume that $q$ is a prime number $\geq 17$.
\par
\medskip
\par

\textbf{Step 1.} (Equidistribution; Bagchi's Theorem) For $q$ prime,
we view the finite set $S_2(q)^*$ as a probability space with the
probability measure proportional to the ``harmonic'' measure where
$f\in S_2(q)^*$ has weight
$$
\frac{1}{\langle f,f\rangle}
$$
in terms of the Petersson inner product. We write correspondingly
$\expect_q(\cdot)$ or $\proba_q(\cdot)$ for the corresponding
expectation and probability. Hence there exists a constant $c_q>0$
such that 
$$
\expect_q(\varphi(f))=\sum_{f\in S_2(q)^*}\frac{c_q}{\langle
  f,f\rangle} \varphi(f)
$$
for any $\varphi\colon S_2(q)^*\to\Cc$. From the Petersson formula, it
is known that $c_q\to 1/(4\pi)$ as $q\to +\infty$ (see, e.g., Iwaniec
and Kowalski~\cite[Ch. 14]{ik} or Cogdell and
Michel~\cite{cogdell-michel}).
\par
Let $D$ be a relatively compact open set in $\Cc$ such that $D$ is
invariant under complex conjugation.  Define $\hol(D)$ to be the
Banach space of functions holomorphic on $D$ and continuous and
bounded on $\bar{D}$, with the norm
$$
\|\varphi\|_{\infty}=\sup_{s\in\bar{D}}|\varphi(s)|.
$$
This is a separable complex Banach space. Define also $\hol_{\Rr}(D)$
to be the set of $\varphi\in \hol(D)$ such that
$f(\bar{s})=\overline{f(s)}$ for all $s\in D$ (this is well-defined
since $C$ is assumed to be invariant under conjugation). Note that the
$L$-function of $f$, restricted to $D$, is an element of
$\hol_{\Rr}(D)$ since the Hecke eigenvalues $\lambda_f(n)$ are real
for all $n\geq 1$.

% Let $D$ be a compact set in $1/2<\Reel(s)<1$ such that $D$ is
% invariant under complex conjugation.  Define $\hol(D)$ to be the
% Banach space of functions holomorphic on $D$ and continuous and
% bounded on $\bar{D}$, with the norm
% $$
% \|\varphi\|_{\infty}=\sup_{s\in\bar{D}}|\varphi(s)|.
% $$
% This is a separable Banach space. Define also $\hol_{\Rr}(D)$ to be
% the set of $\varphi\in \hol(D)$ such that $f(\bar{s})=\overline{f(s)}$
% for all $s\in D$ (this is well-defined since $D$ is assumed to be
% invariant under conjugation). Note that the $L$-function of $f$,
% restricted to $D$, is an element of $\hol_{\Rr}(D)$ since the Hecke
% eigenvalues $\lambda_f(n)$ are real for all $n\geq 1$.

We define $\A{L}_q$ to be the random variable $S_2(q)^*\to \hol(D)$
mapping $f\in S_2(q)^*$ to the restriction of $L(f,s)$ to $D$.  (This
depends on $D$, but the choice of $D$ will always be clear in the
context.)

If $\bar{D}$ is a compact subset of the strip $\demi<\Reel(s)<1$, then
we will show that $\A{L}_q$ converges in law to a random Dirichlet
series. To define the limit, let $(X_p)_p$ be a sequence of
independent random variables indexed by primes, taking values in the
matrix group $\SU_2(\Cc)$ and distributed according to the probability
Haar measure on $\SU_2(\Cc)$.

\begin{theorem}[Bagchi's Theorem for modular
  forms]\label{th-equidistribution}
  Assume that $\bar{D}$ is a compact subset of the strip
  $\demi<\Reel(s)<1$.  Then, as $q\ra +\infty$, the random variables
  $\A{L}_q$ converge in law to the random Euler product
$$
L_D(s)=\prod_{p}\det(1-X_pp^{-s})^{-1}= \prod_p
(1-\Tr(X_p)p^{-s}+p^{-2s})^{-1}
$$
which is almost surely convergent in $\hol(D)$, and belongs almost
surely to $\hol_{\Rr}(D)$.
\end{theorem}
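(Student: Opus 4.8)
Throughout, fix bounded open sets $D'$ and $\mathcal U$ with $\bar D\subset D'\subset\bar{D'}\subset\mathcal U$ and $\bar{\mathcal U}\subset\{\demi<\Reel(s)<1\}$, put $\sigma_1=\inf_{\bar{D'}}\Reel(s)$, $\sigma_2=\sup_{\bar{D'}}\Reel(s)$ (so $\demi<\sigma_1\le\sigma_2<1$), and write $\sigma=\Reel(s)$, $t=\Imag(s)$. The plan is to prove the convergence in law through the standard three-step criterion for weak convergence in a separable metric space (see \cite{proba}, Ch.~3): it suffices to produce $\hol(D)$-valued random variables $Y_q^{(N)}$ on $(S_2(q)^\ast,\proba_q)$, together with $\hol(D)$-valued random variables $Y^{(N)}$ and $Y$ on an auxiliary space, such that (i) $Y_q^{(N)}\to Y^{(N)}$ in law as $q\to+\infty$ for each fixed $N$; (ii) $Y^{(N)}\to Y$ in law as $N\to+\infty$; and (iii) $\lim_{N}\limsup_{q}\proba_q(\|\A{L}_q-Y_q^{(N)}\|_\infty>\eps)=0$ for every $\eps>0$. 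I will take $Y_q^{(N)}(f)$ to be the Dirichlet polynomial $\sum_{n\le N}\lambda_f(n)n^{-s}$, restricted to $D$; $Y=L_D$; and $Y^{(N)}=\sum_{n\le N}\tilde\lambda(n)n^{-s}$, where $\tilde\lambda(n)=\prod_p\Tr(\symk^{v_p(n)}X_p)$ is the multiplicative ``random Hecke coefficient'' attached to the $X_p$, so that $L_D(s)=\sum_n\tilde\lambda(n)n^{-s}$ formally.

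I would first dispose of the assertions about $L_D$ and of step (ii). Writing $P_N(s)=\prod_{p\le N}\det(1-X_pp^{-s})^{-1}$, the identity $\expect\bigl(\det(1-X_pp^{-s})^{-1}\bigr)=\sum_{k\ge0}\expect\bigl(\Tr(\symk^kX_p)\bigr)p^{-ks}=1$ (the higher symmetric powers are nontrivial irreducibles, so their characters integrate to zero) shows that $(P_N|_{D'})_N$ is an $L^2$-bounded martingale with values in the Hilbert space $L^2(D',\,dA)$ for the filtration generated by the $X_p$: indeed $\expect|P_N(s)|^2=\prod_{p\le N}(1-p^{-2\sigma})^{-1}\le\zeta(2\sigma_1)<+\infty$, whence $\expect\|P_N\|_{L^2(D')}^2\le\zeta(2\sigma_1)\,\vol(D')$. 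It therefore converges almost surely in $L^2(D')$, and since the $P_N$ are holomorphic on $\mathcal U\supset\bar D$, the mean-value (Bergman-type) inequality $\sup_{\bar D}|g|\ll_{D',D}\|g\|_{L^2(D')}$ upgrades this to almost sure convergence in $\hol(D)$; this is the almost sure convergence of $L_D$. Because $\Tr(X_p)\in\Rr$ for $X_p\in\SU_2(\Cc)$, every $P_N$ lies in the closed subspace $\hol_{\Rr}(D)$, hence so does $L_D$. The same second-moment computation applied to $P_N-Y^{(N)}$ — a sum over $N$-smooth integers $>N$, of mean square $\le\sum_{m>N}m^{-2\sigma_1}$ — shows $Y^{(N)}\to L_D$ in $L^2(\Omega;L^2(D'))$, hence in probability in $\hol(D)$, which is (ii).

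Step (i) is an instance of vertical Sato--Tate equidistribution. By Hecke multiplicativity, any monomial $\prod_{n\le N}\lambda_f(n)^{a_n}$ is a non-negative integral combination $\sum_m\gamma_m\lambda_f(m)$, with the $\gamma_m$ and the range of $m$ independent of $q$; the Petersson formula (\cite{ik}, Ch.~14; at prime level there are no oldforms), together with the Weil bound for the Kloosterman sums in its error term, gives $\expect_q(\lambda_f(m))\to\delta_{m=1}$ as $q\to+\infty$ for each fixed $m$, so $\expect_q\bigl(\prod_{n\le N}\lambda_f(n)^{a_n}\bigr)\to\gamma_1$. The same combinatorial expansion on the random side — valid because $\symk^i\otimes\symk^j\cong\bigoplus_{l}\symk^{\,i+j-2l}$ gives precisely the Hecke relations satisfied by the $\tilde\lambda(n)$, and $\expect(\tilde\lambda(p^k))=\expect(\Tr(\symk^kX_p))=\delta_{k=0}$ — yields $\expect\bigl(\prod_{n\le N}\tilde\lambda(n)^{a_n}\bigr)=\gamma_1$ as well. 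Since $|\lambda_f(n)|\le d(n)$ by Deligne's bound and $|\tilde\lambda(n)|\le d(n)$, these bounded random vectors are determined by their joint moments, so $(\lambda_f(n))_{n\le N}\to(\tilde\lambda(n))_{n\le N}$ in law; composing with the continuous linear map $c\mapsto\sum_{n\le N}c_nn^{-s}$ into $\hol(D)$ gives $Y_q^{(N)}\to Y^{(N)}$ in law.

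Step (iii) is the crux, and the main obstacle. By Chebyshev's inequality and the Bergman-type bound above, it suffices to prove
\[
\lim_{N\to+\infty}\ \limsup_{q\to+\infty}\ \int_{D'}\expect_q\Bigl|L(f,s)-\sum_{n\le N}\lambda_f(n)n^{-s}\Bigr|^{2}\,dA(s)=0 .
\]
Here I would insert the approximate functional equation for $L(f,s)$ (\cite{ik}, Ch.~5): since $f\in S_2(q)^\ast$ has analytic conductor $\asymp q(1+|t|)^{2}$ at height $t$, one has, for $s\in D'$,
\[
L(f,s)=\sum_{n\ge1}\frac{\lambda_f(n)}{n^{s}}\,V_s\!\Bigl(\frac{n}{\sqrt q}\Bigr)+\mathcal X(s)\sum_{n\ge1}\frac{\lambda_f(n)}{n^{1-s}}\,\widetilde V_s\!\Bigl(\frac{n}{\sqrt q}\Bigr),
\]
with $V_s,\widetilde V_s$ smooth, bounded, essentially supported on $n\ll\sqrt q\,(1+|t|)^{1+o(1)}$, with $V_s(x)\to1$ as $x\to0$, and with $|\mathcal X(s)|\asymp q^{\demi-\sigma}(1+|t|)^{1-2\sigma}$ (this factor absorbs the root number, of modulus one, so $|\mathcal X(s)|$ is non-random). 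Writing the difference as $\sum_{n\le N}\lambda_f(n)n^{-s}(V_s(n/\sqrt q)-1)+\sum_{n>N}\lambda_f(n)n^{-s}V_s(n/\sqrt q)+\mathcal X(s)\sum_{n}\lambda_f(n)n^{-(1-s)}\widetilde V_s(n/\sqrt q)$ and expanding the second moment by the Petersson formula, one checks: the first term is $o_N(1)$ as $q\to+\infty$ for fixed $N$ (because $V_s(n/\sqrt q)\to1$ and $\expect_q(\lambda_f(n)^2)\ll1$); the diagonal of the second term is $\le\sum_{n>N}n^{-2\sigma_1}\ll N^{1-2\sigma_1}$, uniformly in $q$; and all off-diagonal (Weil-bounded) contributions, together with the entire dual-sum term — which carries $|\mathcal X(s)|^2\asymp q^{1-2\sigma}(1+|t|)^{O(1)}$ against a sum $\sum_{n\ll\sqrt q(1+|t|)}n^{-2(1-\sigma)}\ll q^{\sigma-\demi}(1+|t|)^{O(1)}$ — are $\ll q^{\demi-\sigma+o(1)}(1+|t|)^{O(1)}$, which tends to $0$ because $\sigma>\demi$ on $D'$. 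Integrating these bounds over the bounded region $D'$ (they are uniform in $q$ and integrable, so the reverse Fatou lemma applies) leaves $\ll_{D'}N^{1-2\sigma_1}\to0$. The delicate point is exactly this interplay of lengths: the approximate functional equation has exact length $\asymp q^{1/2}$, which stays comfortably below the threshold (of order $q^{3/4}$ as $\sigma\to\demi$) up to which the Petersson diagonal dominates the Weil-bounded off-diagonal, whereas a cruder substitute — smoothing plus the convexity bound — would reach only $\sigma>\tfrac34$; thus it is the (standard) approximate functional equation, not any subconvexity input, that makes the whole strip accessible. With (i)--(iii) in hand, the three-step criterion yields $\A{L}_q\to L_D$ in law, and the almost sure statements have already been established.
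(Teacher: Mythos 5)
Your proposal is correct, and it reaches the theorem by a genuinely different route in the two places where the real work happens. On the random side, the paper proves almost sure convergence of the Euler product by taking logarithms, applying Kolmogorov's three-series theorem to $\sum_p Y_pp^{-\sigma}$ and the Rademacher--Menchov theorem to the Dirichlet series, and controls sup-norms over $D$ by a Cauchy integral over the boundary of a rectangle; you instead note that the partial products form an $L^2$-bounded martingale with values in $L^2(D')$ --- exploiting the full orthonormality $\expect(\tilde{\lambda}(m)\tilde{\lambda}(n))=\delta_{mn}$, where the paper only records orthonormality for squarefree indices and treats general $n$ by factorization --- and pass to sup-norms via the sub-mean-value inequality. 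This is cleaner and equally rigorous (Hilbert-space-valued $L^2$-bounded martingales do converge almost surely), though you should say a word about why the conditional expectations make sense for Bochner-integrable variables. More substantively, for the uniform-in-$q$ tail estimate the paper smooths the cutoff, shifts a Mellin contour to $\Reel(w)=-\delta$, and then invokes the first-moment bound $\expect_q(|L(f,s)|)\ll(1+|s|)^A$ of Proposition~\ref{pr-second-moment}, which is imported as a black box from the Kowalski--Michel second-moment estimate; you keep a sharp cutoff and re-derive the needed mean-square bound from scratch via the approximate functional equation and the Petersson formula, including the dual sum weighted by $|\mathcal{X}(s)|^2$. Your version is self-contained and makes transparent why the full strip $\demi<\Reel(s)<1$ is accessible without subconvexity (length $\asymp q^{1/2}$ against the Petersson off-diagonal threshold), whereas the paper's version deliberately reduces everything to two quotable arithmetic inputs --- local spectral equidistribution and a first-moment bound --- precisely so that the argument transfers to general families as discussed in Section~\ref{sec-general}. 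The only steps you should make explicit in a final write-up are routine: the cross terms in the expansion of the mean square of the three-term decomposition (Cauchy--Schwarz from the individual diagonal bounds) and the uniformity in $s\in D'$ of the constants in the approximate functional equation, which is harmless since $D'$ is bounded.
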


% We emphasize that this is just a meta-proposition. Whether it holds
% may depend in particular on the choice of $D$ (it may be necessary to
% assume that $D$ is sufficiently close to $\sigma=1$), but it can be
% expected to be true for most families of interest. Arithmetically,
% this is then a property reflecting the well-understood fact that, up
% to the critical line (or at least close enough to the edge
% $\sigma=1$), the $L$-function behaves as a product of ``independent''
% terms, reflecting the approximate independence of the primes in such a
% range. Examples of this go back to Bohr and Jessen, and amplifications
% in special cases are, by now, very numerous.
\par
\medskip
\par
\textbf{Step 2.} (Support of the random Euler product) 

To deduce Theorem~\ref{th-gl2} from Theorem~\ref{th-equidistribution},
we need the following computation of the support of the limiting
measure.

\begin{theorem}\label{th-support}
  Suppose that $D$ is a disc with positive radius and diameter a
  segment of the real axis, always with $\bar{D}$ contained in
  $\demi <\Reel(s)<1$.  The support of the law of the random Euler
  product $L_D$ contains the set of functions $\varphi\in \hol(D)$
  such that $\varphi(x)>0$ for $x\in D\cap \Rr$.
\end{theorem}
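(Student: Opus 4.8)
The plan is to carry out the classical Voronin--Bagchi support computation, adapted to the $\SU_2$-valued model. Throughout write $\sigma_0=\min_{s\in\bar D}\Reel(s)>\demi$ and $\sigma_1=\max_{s\in\bar D}\Reel(s)<1$; note that since $D$ is a disc centred on the real axis, $\max_{s\in\bar D}|s|=\sigma_1<1$. First I would reduce the theorem to a statement about $\log L_D$. Since $D$ is simply connected and $\varphi$ is non-vanishing on $D$ and positive on $D\cap\Rr$, there is a holomorphic logarithm $\psi=\log\varphi$ on $D$ that is real on $D\cap\Rr$, and $\psi$ extends continuously to $\bar D$ because $\varphi$ is continuous and non-vanishing on the compact set $\bar D$; thus $\psi\in\hol_{\Rr}(D)$. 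By Theorem~\ref{th-equidistribution}, $L_D$ converges almost surely in $\hol_{\Rr}(D)$; each partial Euler product is non-vanishing on $\bar D$ (the eigenvalues of $X_p$ have modulus $1$ and $p^{-\Reel(s)}<1$ there), so by Hurwitz's theorem $L_D$ is almost surely non-vanishing or identically zero, the latter being excluded by a standard argument (the real part of the logarithm of the product converges almost surely at a real point of $D$, by Kolmogorov's criterion for sums of independent square-integrable terms of mean related to $p^{-2\sigma_0}$). Hence $\log L_D$ is almost surely a well-defined $\hol_{\Rr}(D)$-valued random variable with $L_D=\exp(\log L_D)$, and since $g\mapsto e^g$ is Lipschitz on bounded subsets of $\hol(D)$, it suffices to prove that the support of the law of $\log L_D$ contains all of $\hol_{\Rr}(D)$; applying this to $\psi$ then gives $\proba(\|L_D-\varphi\|_\infty<\eps)>0$ for all $\eps>0$.

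Next I would reduce this support statement to a deterministic approximation problem. Write $Y_p(s)=-\log\det(1-X_pp^{-s})=\sum_{k\geq1}\tfrac1k\Tr(X_p^k)p^{-ks}$, so that $\log L_D=\sum_pY_p$ with the $Y_p$ independent and $\hol_{\Rr}(D)$-valued, and $\supp(Y_p)=\{-\log\det(1-x_\theta p^{-s}):\theta\in[0,\pi]\}$ with $x_\theta=\mathrm{diag}(e^{i\theta},e^{-i\theta})$, because the law of $X_p$ is Haar measure, which surjects onto conjugacy classes, i.e.\ onto traces $2\cos\theta\in[-2,2]$. A standard elementary argument for sums of independent Banach-space-valued variables --- choose $N$ so that the head $\sum_{p\leq N}g_p$ of a given convergent sum is close to it, use full support of the joint law of $(X_p)_{p\leq N}$ to hit that head with positive probability, and use the variance bound $\expect\|\sum_{p>N}Y_p\|_\infty^2\ll\sum_{p>N}p^{-2\sigma_0}\to0$ (via the mean-value inequality $\|\cdot\|_{\hol(D)}\ll\|\cdot\|_{L^2(D')}$ for a slightly larger disc $D'$ in the strip, together with $\expect[\Tr X_p]=0$) to make the tail small with positive probability --- shows that
$$\supp(\log L_D)=\overline{\Big\{\textstyle\sum_p g_p\ :\ g_p\in\supp(Y_p),\ \sum_p g_p\ \text{converges in }\hol(D)\Big\}}.$$
So it remains to prove that this set of convergent sums is dense in $\hol_{\Rr}(D)$.

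The crux is this deterministic density statement. I would expand $-\log\det(1-x_\theta p^{-s})=(2\cos\theta)\,p^{-s}+h_p(\theta,s)$, where $a:=2\cos\theta\in[-2,2]$ is a free parameter and $h_p(\theta,s)=\sum_{k\geq2}\tfrac2k\cos(k\theta)p^{-ks}$ satisfies $\|h_p(\theta,\cdot)\|_\infty\ll p^{-2\sigma_0}$ uniformly in $\theta$, so that $\sum_p h_p$ is always absolutely convergent. The only genuinely arithmetic input is a Voronin-type lemma: for every non-zero continuous linear functional $\ell$ on $\hol(D)$ one has $\sum_p|\ell(p^{-s})|=\infty$. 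This follows, as in Voronin's original argument, by representing $\ell$ by a measure on $\bar D$, noting that $\ell(p^{-s})=g(\log p)$ for an entire function $g$ of exponential type $\leq\sigma_1<1$ which is not identically zero (polynomials are dense in $\hol(D)$), and invoking the classical fact, via the prime number theorem, that such a $g$ cannot satisfy $\sum_p|g(\log p)|<\infty$. Granting this, a rearrangement theorem for series in Banach spaces of Pecherskii type applies to the vectors $v_p:=2p^{-s}$ (which satisfy $v_p\to0$, $\sum_p\|v_p\|^2<\infty$ and $\sum_p|\ell(v_p)|=\infty$ for $\ell\neq0$), yielding that $\{\sum_p a_p p^{-s}:a_p\in[-2,2],\ \text{convergent in }\hol(D)\}$ is dense in $\hol_{\Rr}(D)$; the same holds using only the primes in any subset of primes which stays ``thick enough'' in this sense --- in particular, using only primes exceeding a prescribed bound, or only every second prime, since discarding a thin set of primes affects neither the divergence $\sum|\ell(p^{-s})|=\infty$ nor $\sum\|v_p\|^2<\infty$.

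Finally, to absorb the corrections $h_p$, I would split the primes into two disjoint thick sets $P_1\sqcup P_2$, with $P_2$ consisting of large enough primes (say, every second prime exceeding a bound $N$) that $\sum_{p\in P_2}p^{-2\sigma_0}<\eps/3$, and $P_1$ its complement. Given $\psi\in\hol_{\Rr}(D)$ and $\eps>0$: choose angles $\theta_p$ for $p\in P_1$ with $\bigl\|\sum_{p\in P_1}(2\cos\theta_p)p^{-s}-\psi\bigr\|_\infty<\eps/3$, producing a correction $H_1:=\sum_{p\in P_1}h_p(\theta_p,\cdot)\in\hol_{\Rr}(D)$ of some bounded sup-norm; then, since $-H_1\in\hol_{\Rr}(D)$, choose angles $\theta_p$ for $p\in P_2$ with $\bigl\|\sum_{p\in P_2}(2\cos\theta_p)p^{-s}+H_1\bigr\|_\infty<\eps/3$, whose correction $H_2:=\sum_{p\in P_2}h_p(\theta_p,\cdot)$ has $\|H_2\|_\infty<\eps/3$ by the choice of $P_2$. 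Summing over all primes, $\sum_p\bigl(-\log\det(1-x_{\theta_p}p^{-s})\bigr)$ equals $\psi+H_2$ up to an error of sup-norm $<2\eps/3$, hence lies within $\eps$ of $\psi$; this establishes the required density and thereby the theorem. I expect the main obstacles to be the Voronin lemma $\sum_p|\ell(p^{-s})|=\infty$ (the sole genuinely number-theoretic ingredient, resting on entire functions of exponential type $<1$ and the prime number theorem) and the careful invocation of the rearrangement theorem in $\hol(D)$; the two-prime-set bookkeeping needed to handle the non-linearity of the local Euler factors is a minor but unavoidable complication.
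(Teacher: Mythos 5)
Your proposal is correct in substance and shares the overall architecture of the paper's proof: pass to $\log L_D$, use the support theorem for sums of independent $\hol(D)$-valued variables to reduce to a deterministic density statement for convergent series $\sum_p(2\cos\theta_p)p^{-s}$ in $\hol_{\Rr}(D)$, absorb the higher-order terms of the local Euler factors separately, and exponentiate (via Hurwitz) at the end. The two sub-steps are, however, implemented quite differently, and the comparison is instructive. For the density lemma, the paper deliberately does \emph{not} redo the Voronin--Pechersky analysis: it quotes Bagchi's lemma that the series $\sum_{p>N}e^{i\theta_p}p^{-s}$ with unimodular \emph{complex} coefficients are dense in all of $\hol(D)$, and then gets the real statement by a one-line symmetrization (if $\sum_p e^{i\theta_p}p^{-s}$ approximates $\varphi/2$ with $\varphi\in\hol_{\Rr}(D)$, then $\sum_p e^{-i\theta_p}p^{-s}$ approximates $\overline{\varphi(\bar{s})}/2=\varphi(s)/2$, so the traces $2\cos\theta_p$ approximate $\varphi$); this is the ``trick'' the paper advertises. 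You instead invoke a rearrangement theorem directly with real coefficients $a_p\in[-2,2]$ and density in the real subspace $\hol_{\Rr}(D)$. That statement is true, but it is not the off-the-shelf version (which concerns unimodular coefficients and the full complex space); making it precise requires either reworking the Pechersky-type argument over $\Rr$ or exactly the conjugation trick above, and the functionals to be tested become real-linear functionals on $\hol_{\Rr}(D)$, whose associated entire functions are $x\mapsto\Reel\int e^{-xs}\,d\mu(s)$, so a symmetrization of the representing measure is needed before the exponential-type/PNT argument applies. Likewise, your use of two interleaved ``thick'' sets of primes needs the quantitative (positive lower density) form of the Voronin lemma, not merely divergence of $\sum_p|\ell(p^{-s})|$; you flag this, and it does go through. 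For the quadratic corrections $h_p$, your two-block scheme ($P_1$ to hit the target, $P_2$ to cancel the accumulated correction $H_1$, with $P_2$ far enough out that its own correction is $O(\eps)$) is a clean and fully correct alternative to the paper's device of freezing $x_p=1$ for $p\leq N$ and shifting the target function; if anything your bookkeeping is tidier.

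One remark on scope: you assume $\varphi$ is non-vanishing on all of $D$ in order to take $\log\varphi$, whereas the statement only requires positivity on $D\cap\Rr$. Since $L_D$ is almost surely zero-free on $D$, Hurwitz's theorem shows that no function with a zero in $D$ (other than the zero function) can lie in the support anyway, so the restriction to non-vanishing $\varphi$ is forced, and it is all that is used in the deduction of Theorem~\ref{th-gl2}.
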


Note that since $D\cap\Rr$ is an interval of positive length in $\Rr$,
the condition $\varphi(x)>0$ for all $x\in D\cap \Rr$ implies by
analytic continuation that $\varphi\in\hol_{\Rr}(D)$, which by
Bagchi's Theorem~\ref{th-equidistribution} is a necessary condition to
be in the support of $L_D$.

\par
\medskip
\par
\textbf{Step 3.} (Conclusion) The elementary Lemma~\ref{lm-elem}
below, combined with Theorems~\ref{th-equidistribution}
and~\ref{th-support}, implies Theorem~\ref{th-gl2} in the form
\begin{equation}\label{eq-harm-concl}
  \liminf_{q\to+\infty}
  \proba_q(\|L(f,\cdot)-\varphi\|_{\infty}<\eps)
  =
  \liminf_{q\to+\infty}
  \sum_{\substack{f\in S_2(q)^*\\
      \|L(f,\cdot)-\varphi\|_{\infty}<\eps }} \frac{c_q}{\langle
    f,f\rangle} >0
\end{equation}
for any function $\varphi$ as in Theorem~\ref{th-gl2} and any
$\eps>0$. We can easily deduce the ``natural density'' version from
this: let $A$ be the set of those $f\in S_2(q)^*$ such that
$\|L(f,\cdot)-\varphi\|_{\infty}<\eps$; then for any parameter
$\eta>0$, the definition of the harmonic measure on $S_2(q)^*$ gives
$$
\frac{1}{|S_2(q)^*|}\sum_{f\in A}{1}
=\expect_q\Bigl(\charfun_{A}(f)\frac{\langle
  f,f\rangle}{c_q|S_2(q)^*|}\Bigr)
\geq
\eta\Bigl(\proba_q(A)-\proba_q\Bigl(\frac{\langle
  f,f\rangle}{c_q|S_2(q)|^*}<\eta\Bigr)\Bigr).
$$
There exists $\delta>0$ such that the first term is $\geq \delta>0$
for all $q$ large enough by~(\ref{eq-harm-concl}); on the other hand,
a result of Cogdell and Michel~\cite[Cor. 1.16]{cogdell-michel} and the
classical relation between the Petersson norm and the symmetric square
$L$-function at $s=1$ (see, e.g.,~\cite[(5.101)]{ik}) imply that we
can find $\eta>0$ such that
$$
\lim_{q\to+\infty} \proba_q\Bigl(\frac{\langle
  f,f\rangle}{c_q|S_2(q)|^*}<\eta\Bigr) <\frac{\delta}{2}.
$$
For this value of $\eta$, we obtain
$$
\liminf_{q\to+\infty} \frac{1}{|S_2(q)^*|}\sum_{f\in A}{1}\geq
\frac{\eta\delta}{2}>0.
$$
% , the result of Cogdell-Michel is that
% $$
% \lim_{q\to+\infty} \frac{1}{|S_2(q)^*|} \sum_{\substack{f\in
%     S_2(q)^*\\\langle f,f\rangle<\delta}}1=
% F(\log \delta)
% $$
% where $F$ is the limiting distribution function for the special value
% at $1$ of the symmetric square $L$-function of $f\in S_2(q)^*$. Since
% $F(x)\to 0$ when $x\to-\infty$, we obtain the result.
% combining this with a result of Cogdell and
% Michel~\cite[Cor. 1.7]{cogdell-michel} that implies that for any
% $\delta>0$, we can find $\eta>0$ such that
% $$
% \lim_{q\to+\infty} \proba_q\Bigl(\frac{\langle
%   f,f\rangle}{c_q|S_2(q)|^*}<\delta\Bigr) <\
More precisely, the result of Cogdell-Michel is that for any
$\eta>0$, we have
$$
\lim_{q\to+\infty} \proba_q(L(\mathrm{Sym}^2f,1)\leq \eta)=F(\log
\eta)
$$
where
$F$ is the limiting distribution function for the special value at
$1$ of the symmetric square $L$-function of $f\in
S_2(q)^*$. Since $F(x)\to 0$ when $x\to-\infty$, we obtain the result.

\begin{remark}
  It would also be possible to argue throughout with the uniform
  probability measure on $S_2(q)^*$; the only change would be a
  slightly different form of Theorem~\ref{th-equidistribution}, where
  the random variables $(X_p)$ would not be identically distributed
  (compare with the equidistribution theorems of Serre~\cite{serre}
  and Conrey--Duke--Farmer~\cite{cdf}).
\end{remark}

\begin{lemma}\label{lm-elem}
  Let $M$ be a separable complete metric space and $(X_n)$ a sequence
  of random variables with values in $M$ that converge in law to
  $X$. Let $S$ be the support of the law of $X$. Then for any
  $x\in S$, and any open neighborhood $U$ of $x$, we have
$$
\liminf_{n\ra +\infty}\proba(X_n\in U)>0.
$$
\end{lemma}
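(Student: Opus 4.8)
The plan is to use the Portmanteau theorem, specifically the characterization of convergence in law via open sets: if $X_n \to X$ in law on a metric space $M$, then for every open set $U \subseteq M$ one has $\liminf_{n\to+\infty}\proba(X_n\in U)\geq \proba(X\in U)$. Granting this, the lemma reduces to showing that $\proba(X\in U)>0$ whenever $U$ is an open neighborhood of a point $x$ in the support $S$ of the law of $X$. But this is essentially the definition of the support: $S$ is the smallest closed set carrying full measure, equivalently the set of points $x$ such that every open neighborhood of $x$ has positive measure under the law $\nu$ of $X$. Hence $\proba(X\in U)=\nu(U)>0$, and combining the two inequalities gives $\liminf_n \proba(X_n\in U)\geq \nu(U)>0$.

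First I would recall (or prove in one line) the relevant direction of Portmanteau: for an open set $U$, the function $\charfun_U$ is lower semicontinuous, so writing $\charfun_U$ as an increasing limit of bounded continuous functions $g_k$ with $0\leq g_k\leq \charfun_U$ (possible since $M$ is metric, e.g.\ $g_k(y)=\min(1,k\,d(y,M\setminus U))$), we get for each fixed $k$
$$
\liminf_{n\to+\infty}\proba(X_n\in U)\geq \liminf_{n\to+\infty}\expect(g_k(X_n))=\expect(g_k(X)),
$$
and then letting $k\to+\infty$ and using monotone convergence on the right yields $\liminf_n\proba(X_n\in U)\geq \expect(\charfun_U(X))=\proba(X\in U)$. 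Here separability and completeness of $M$ are not strictly needed for this direction, but they are the standing hypotheses under which "convergence in law" is the clean notion, and they guarantee the support $S$ is well-defined and itself has full measure.

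Second, I would spell out why $x\in S$ forces $\nu(U)>0$ for every open neighborhood $U$ of $x$: if $\nu(U)=0$, then $M\setminus U$ would be a closed set of full measure not containing $x$, so $x\notin S$ by minimality of the support, a contradiction. Putting the pieces together finishes the proof. I do not expect any genuine obstacle here; the only mild point to be careful about is citing the correct (open-set) half of Portmanteau rather than the closed-set half, and noting that it holds in any metric space so the hypotheses on $M$ are more than enough. One could alternatively cite a standard reference (e.g.\ Billingsley) for Portmanteau and for the basic properties of the support and keep the proof to a couple of lines.
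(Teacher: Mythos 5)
Your proposal is correct and follows essentially the same route as the paper: the open-set half of the Portmanteau theorem (which the paper simply cites from Billingsley, Th.~2.1~(iv)) combined with the fact that every open neighborhood of a point of the support has positive measure. The extra details you supply (the lower semicontinuous approximation of $\charfun_U$ and the characterization of the support) are correct but not needed beyond the citation.
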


\begin{proof}
By classical criteria for convergence in law, we have
\begin{equation}\label{eq-1}
  \liminf_{n\ra +\infty}{\proba(X_n\in U)}\geq \proba(X\in U)
\end{equation}
for any open set $U\subset X$ (see, e.g.,~\cite[Th. 2.1
(iv)]{billingsley}). Since $x\in S$, we have $\proba(X\in U)>0$, hence
the result.
\end{proof}

\section{Proof of Theorem~\ref{th-equidistribution}}
\label{sec-proof1}

We begin with some preliminaries concerning the random Euler product
$L_D$. In fact, if will be convenient to view it as a holomorphic
function on larger domains then $D$, in a way that will be clear
below. For this purpose, we fix a real number $\sigma_0$ such that
$\demi<\sigma_0<1$, and such that the compact set $\bar{D}$ is
contained in the half-plane $S$ defined by $\Reel(s)>\sigma_0$.
\par
We recall that for $\nu\geq 0$, the $d$-th Chebychev polynomial is
defined by
$$
U_{\nu}(2\cos(x))=\frac{\sin((\nu+1)x)}{\sin(x)}.
$$
The importance of these polynomials for us lies in their relation with
the representation theory of $\SU_2(\Cc)$, namely
$$
U_{\nu}(2\cos(x))=\Tr\Bigl(\mathrm{Sym^{\nu}}\begin{pmatrix}
  e^{ix}&0\\0&e^{-ix}
\end{pmatrix}
\Bigr)
$$
for any $x\in\Rr$, where $\mathrm{Sym}^d$ is the $d$-th symmetric
power of the standard $2$-dimensional representation of $\SU_2(\Cc)$.
\par
We define a sequence of random variables $(Y_n)_{n\geq 1}$ by
$$
Y_n=\prod_{p^{\nu}\mid\mid n}U_{\nu}(\Tr(X_p)).
$$
In particular, we have $Y_nY_m=Y_{nm}$ if $n$ and $m$ are coprime, and
$Y_p=\Tr(X_p)$ if $p$ is prime. The sequence $(Y_p)$ is independent
and Sato-Tate distributed. Moreover, since $|U_{\nu}(t)|\leq \nu+1$
for all $\nu\geq 0$ and all $t\in\Rr$, we have
$$
|Y_n|\leq \prod_{p^{\nu}\mid\mid n}(\nu+1)=d(n)\ll n^{\eps}
$$
for $n\geq 1$ and $\eps>0$, where the implied constant depends only on
$\eps$.

\begin{lemma}\label{lm-random}
  \emph{(1)} Almost surely, the random Euler product
$$
\prod_{p}\det(1-X_pp^{-s})^{-1}
$$
converges and defines a holomorphic function on $S$. In particular,
$L_D$ converges almost surely to define an $\hol(D)$-valued random
variable.
\par
\emph{(2)} Almost surely, we have
$$
\prod_{p}\det(1-X_pp^{-s})^{-1}=\sum_{n\geq 1}Y_nn^{-s}
$$
for all $s\in S$, and in particular $L_D$ coincides with the random
Dirichlet series on the right.
\par
\emph{(3)} For $\sigma>1/2$ and $u\geq 2$, we have
$$
\expect\Bigl(\Bigl|\sum_{n\leq u}Y_nn^{-\sigma}\Bigr|^2\Bigr)
\ll 1,
$$
where the implied constant depends only on $\sigma$.
\end{lemma}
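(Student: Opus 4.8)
\medskip

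The plan is to establish~(3) first, since the resulting second-moment bound is what drives the convergence statements in~(1) and~(2). For~(3), the crucial fact is that $(Y_n)_{n\geq 1}$ is an \emph{orthonormal} system in $L^2(\Omega,\proba)$: if $n=\prod_p p^{a_p}$ and $m=\prod_p p^{b_p}$, then independence of the $X_p$ and the factorization $Y_n=\prod_p U_{a_p}(\Tr X_p)$ give
$$
\expect(Y_nY_m)=\prod_p\expect\bigl(U_{a_p}(\Tr X_p)\,U_{b_p}(\Tr X_p)\bigr),
$$
and, $\Tr X_p$ being Sato--Tate distributed, each factor equals $\delta_{a_p b_p}$ by the orthonormality of the Chebychev polynomials $U_\nu$ for the Sato--Tate measure (equivalently, Schur orthogonality of the representations $\mathrm{Sym}^{\nu}$ of $\SU_2(\Cc)$). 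Since the $Y_n$ are real, $\expect(Y_nY_m)=\delta_{nm}$, so by Bessel's inequality
$$
\expect\Bigl(\Bigl|\sum_{n\leq u}Y_nn^{-\sigma}\Bigr|^2\Bigr)=\sum_{n\leq u}n^{-2\sigma}\leq\zeta(2\sigma)
$$
for $\sigma>\demi$, which is~(3) with implied constant $\zeta(2\sigma)$.

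For~(1), I would pass to logarithms on $S=\{\Reel(s)>\sigma_0\}$, writing $-\log\det(1-X_pp^{-s})=\sum_{k\geq 1}\tfrac1k\Tr(X_p^k)p^{-ks}$ (valid since $\Reel(s)>0$ on $S$). The part with $k\geq 2$ converges absolutely and locally uniformly on $S$ (surely), because $|\Tr(X_p^k)|\leq 2$ and $\sum_p p^{-2\sigma_0}<\infty$. For the main term $\sum_p\Tr(X_p)p^{-s}=\sum_p U_1(\Tr X_p)p^{-s}$, the summands are independent, bounded on any compact $K\subset S$, centered (since $\expect U_1(\Tr X_p)=0$ by the orthonormality above), and satisfy $\sum_p\expect(|\Tr(X_p)p^{-s}|^2)=\sum_p p^{-2\Reel(s)}<\infty$; hence, by Kolmogorov's theorem on sums of independent random variables together with a maximal-inequality and equicontinuity argument as in~\cite{proba}, this series converges almost surely, locally uniformly on $S$. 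Exponentiating, the Euler product converges almost surely to a holomorphic (and, being an exponential, nowhere-vanishing) function on $S$, which restricts to an $\hol(D)$-valued random variable.

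For~(2), on $\Reel(s)>1$ the factor $(1-\Tr(X_p)p^{-s}+p^{-2s})^{-1}=1+\Tr(X_p)p^{-s}+O(p^{-2\Reel(s)})$ (using $|\Tr X_p|\leq 2$) makes the product absolutely convergent; expanding each factor as $\det(1-X_pp^{-s})^{-1}=\sum_{\nu\geq 0}\Tr(\mathrm{Sym}^{\nu} X_p)p^{-\nu s}=\sum_{\nu\geq 0}U_\nu(\Tr X_p)p^{-\nu s}$ and multiplying out — the rearrangement being justified by $\sum_n|Y_n|n^{-\Reel(s)}\leq\sum_n d(n)n^{-\Reel(s)}<\infty$ — yields $\sum_n Y_nn^{-s}$ on $\Reel(s)>1$. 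On the other hand, by~(3) the system $(Y_n)$ is orthonormal and $\sum_n n^{-2\sigma}(\log n)^2<\infty$ for every $\sigma>\demi$, so the Rademacher--Menshov theorem gives almost sure convergence of $\sum_n Y_nn^{-\sigma}$ at every such $\sigma$; taking a sequence $\sigma_j\downarrow\sigma_0$ and using the classical fact that a Dirichlet series converging at a point converges locally uniformly in the open half-plane to its right, we obtain that, almost surely, $\sum_n Y_nn^{-s}$ converges locally uniformly on $S$ to a holomorphic function. This function agrees with the Euler product of~(1) on $\Reel(s)>1$, hence on all of the connected set $S$ by analytic continuation, which gives~(2); in particular $L_D$ coincides on $D$ with the random Dirichlet series.

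The main difficulty is not the second-moment computation but the passage from pointwise-in-$s$ almost sure convergence, which Kolmogorov's theorem and the Rademacher--Menshov theorem deliver directly, to almost sure \emph{locally uniform} convergence in $s$, which is what is needed to conclude that the limiting Euler product and Dirichlet series are holomorphic; for the product this is handled by the maximal-inequality/equicontinuity argument, and for the series by the classical abscissa-of-convergence theory together with the exhaustion $\sigma_j\downarrow\sigma_0$. The only genuinely non-elementary input is the orthonormality of the Chebychev polynomials for the Sato--Tate measure, which is used throughout.
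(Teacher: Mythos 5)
Your proof is correct. Part (1) and the identification step of (2) follow essentially the paper's route: split off the absolutely convergent tail of $-\log\det(1-X_pp^{-s})$, handle the linear term $\sum_p\Tr(X_p)p^{-s}$ by Kolmogorov's theorem for independent centered summands with square-summable variances, and identify the product with the Dirichlet series in a half-plane of absolute convergence before continuing analytically. The genuine difference is your key lemma: you prove that the \emph{whole} system $(Y_n)_{n\geq 1}$ is orthonormal, via Schur orthogonality of the characters $\Tr\,\mathrm{Sym}^{\nu}$ on $\SU_2(\Cc)$, whereas the paper records orthonormality only for squarefree $n$ and, in (3), uses the weaker facts that $\expect(Y_nY_m)$ factors through the gcd and that $\expect(Y_{n'})=0$ unless $n'$ is a square. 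This buys you two real simplifications: (3) becomes the exact identity $\expect\bigl(\bigl|\sum_{n\leq u}Y_nn^{-\sigma}\bigr|^2\bigr)=\sum_{n\leq u}n^{-2\sigma}\leq\zeta(2\sigma)$ instead of a more roundabout bound, and in (2) you can apply Rademacher--Menshov directly to $\sum_n Y_nn^{-\sigma}$, avoiding the paper's detour through the squarefree subseries followed by the Hardy--Riesz theorem on the product of a convergent and an absolutely convergent Dirichlet series. One stylistic remark on (1): the maximal-inequality/equicontinuity argument you invoke to upgrade pointwise almost sure convergence to locally uniform convergence is unnecessary machinery --- as you yourself do in (2), almost sure convergence at a single fixed real $\sigma$ with $\demi<\sigma<\sigma_0$ already implies, pathwise and deterministically, locally uniform convergence on the half-plane $\Reel(s)>\sigma$, which contains $S$; this is exactly how the paper argues.
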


\begin{proof}
  (1) Let $\sigma$ be a fixed real number such that
  $\demi<\sigma<\sigma_0$. By expanding, we can write
$$
-\log\det(1-X_pp^{-s})=Y_pp^{-s}+g_p(s)
$$
where  the random series
$$
\sum_p g_p(s)
$$
converges absolutely (and surely) for $\Reel(s)>1/2$. Since
$\expect(Y_pp^{-\sigma})=0$ and
$\expect(Y_p^2p^{-2\sigma})=p^{-2\sigma}$, Kolmogorov's Three Series
Theorem (see, e.g.,~\cite[Th. 0.III.2]{li-queffelec}) implies that the
random series
$$
\sum_{p} Y_pp^{-\sigma}
$$
converges almost surely. By well-known results on Dirichlet series,
this means that the random series
$$
\sum_p Y_pp^{-s}
$$
converges almost surely to a holomorphic function on the half-plane
$S$. This implies the first statement by taking the exponential. The
second follows by restricting to $D$ since $\bar{D}$ is contained in
$S$.
\par
(2) We first show that the almost surely the random Dirichlet series
$$
\tilde{L}(s)=\sum_{n\geq 1}Y_nn^{-s}
$$
converges and defines a function holomorphic on $S$.  The key point is
that the variables $Y_n$ for $n$ squarefree form an orthonormal
system: we have
$$
\expect(Y_nY_m)=\delta(n,m)
$$
if $n$ and $m$ are squarefree numbers. Indeed, if $n\not=m$, there is
a prime $p$ dividing only one of $n$ and $m$, say $p\mid n$, and then
by independence we get
$\expect(Y_nY_m)=\expect(Y_p)\expect(Y_{n/p}Y_m)=0$; and if $n=m$ is
squarefree then we have
$$
\expect(Y_n^2)=\prod_{p\mid n}\expect(Y_p^2)=1.
$$
Fix again $\sigma$ such that $\demi<\sigma<\sigma_0$. By the
Rademacher--Menchov Theorem (see, e.g.~\cite[Th. B.8.4]{proba}), the
random series
$$
\sumb_{n}Y_nn^{-\sigma}
$$
over squarefree numbers converges almost surely. By elementary
factorization and properties of products of Dirichlet series (the
product of an absolutely convergent Dirichlet series and a convergent
one is convergent, see e.g.~\cite[Th. 54]{hardy-riesz}) the same holds
for
$$
\sum_{n\geq 1} Y_nn^{-\sigma}.
$$
As in (1), this gives the almost sure convergence of the series
defining $\tilde{L}(s)$ to a holomorphic function in $S$. Restricting
gives the $\hol(D)$-valued random variable $\tilde{L}_D$.
\par
Finally, almost surely both the random Euler product and the random
Dirichlet series converge and are holomorphic for
$\Reel(s)>\sigma_0$. For $\Reel(s)>3/2$, they converge absolutely, and
coincide by a well-known formal Euler product computation: for any
prime $p$ and any $x\in\Rr$, denoting
$$
t(x)=\begin{pmatrix}e^{ix}&0\\0&e^{-ix}
\end{pmatrix}
$$
we have
\begin{align*}
  \det(1-t(x)p^{-s})^{-1}
  &=
    (1-e^{ix}p^{-s})^{-1}(1-e^{-ix}p^{-s})^{-1}\\
  &=
    \sum_{\nu\geq 0}\Tr\mathrm{Sym}^{\nu}(t(x))p^{-\nu s}= \sum_{\nu\geq
    0}U_{\nu}(x)p^{-\nu s}
\end{align*}
(compare the discussion of Cogdell and Michel in~\cite[\S
2]{cogdell-michel}).  By analytic continuation, we deduce that
$L_D=\tilde{L}_D$ almost surely as $\hol(D)$-valued random variables.
\par
(3) Since the random varibles $Y_n$ are real-valued, we have
$$
\expect\Bigl(\Bigl|\sum_{n\leq u}Y_nn^{-\sigma}\Bigr|^2\Bigr)
=\sum_{n,m\leq u}\frac{1}{(nm)^{\sigma}}\expect(Y_nY_m).
$$
For given $n$ and $m$, let $d=(n,m)$ and $n'=n/d$, $m'=m/d$. Then by
multiplicativity and independence of the variables $(Y_p)_p$, we have
$$
\expect(Y_nY_m)=\expect(Y_d^2)\expect(Y_{n'})\expect(Y_{m'}).
$$
By the definition of $Y_p$, we have $\expect(Y_{n'})=0$ if $n'$ is
divisible by a prime $p$ with odd exponent, and similarly for
$\expect(Y_{m'})$. Hence we have $\expect(Y_nY_m)=0$ unless both $n'$
and $m'$ are squares. Therefore
$$
\expect\Bigl(\Bigl|\sum_{n\leq u}Y_nn^{-\sigma}\Bigr|^2\Bigr) \leq
\sum_{d\geq 1}\frac{\expect(Y_d^2)}{d^{2\sigma}} \sum_{m,n\geq
  1}\frac{1}{(mn)^{2\sigma}}\expect(Y_mY_n)<+\infty
$$
since $\sigma>1/2$ and $\expect(Y_n)\ll n^{\eps}$ for any $\eps>0$.
\end{proof}

The key arithmetic properties of the family $S_2(q)^*$ of modular
forms that are required in the proof of
Theorem~\ref{th-equidistribution} are the following:

\begin{proposition}[Local spectral equidistribution]\label{pr-spectral}
  As $q\to+\infty$, the sequence $(\lambda_f(p))_{p}$ of Fourier
  coefficients of $f\in S_2(q)^*$ converges in law to the sequence
  $(Y_p)_p$.
\end{proposition}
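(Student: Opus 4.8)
The plan is to establish convergence in law by the method of moments. By the Eichler--Shimura relations (the Ramanujan--Petersson bound in weight $2$), one has $|\lambda_f(p)|\leq 2$ for all primes $p$ and all $f\in S_2(q)^*$, so the sequence $(\lambda_f(p))_p$ takes values in the compact metrizable space $K=\prod_p[-2,2]$, and similarly for $(Y_p)_p$. On such a product of compact intervals, weak convergence of a sequence of probability measures is equivalent to convergence of all finite-dimensional marginals, and --- since by the Stone--Weierstrass theorem the polynomials in finitely many coordinates are dense in $C(K)$ --- this in turn is equivalent to the statement that, for every finite set of primes $p_1<\dots<p_r$ and all integers $\nu_1,\dots,\nu_r\geq 0$,
$$
\expect_q\Bigl(\prod_{i=1}^r\lambda_f(p_i^{\nu_i})\Bigr)\longrightarrow\expect\Bigl(\prod_{i=1}^r Y_{p_i^{\nu_i}}\Bigr)\qquad(q\to+\infty).
$$
Here, as soon as $q>p_r$, all the $p_i$ differ from $q$, so $\lambda_f(p_i^{\nu_i})=U_{\nu_i}(\lambda_f(p_i))$ by the Hecke relations; conversely every monomial in the $\lambda_f(p_i)$ is a linear combination of such products, so these averages suffice.

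The right-hand side is computed directly. With $m=\prod_ip_i^{\nu_i}$, independence of the $X_p$ gives $\expect(Y_m)=\prod_i\expect\bigl(U_{\nu_i}(\Tr X_{p_i})\bigr)$; and since $U_\nu=\Tr\mathrm{Sym}^\nu$ with $\mathrm{Sym}^\nu$ a non-trivial irreducible representation of $\SU_2(\Cc)$ for $\nu\geq 1$, Schur orthogonality gives $\expect\bigl(U_\nu(\Tr X_p)\bigr)=1$ if $\nu=0$ and $0$ otherwise. Hence $\expect(Y_m)=\delta(m,1)$.

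For the left-hand side, multiplicativity of the Hecke eigenvalues turns $\expect_q\bigl(\prod_i\lambda_f(p_i^{\nu_i})\bigr)$ into the single harmonic average $\expect_q(\lambda_f(m))$. Since $q$ is prime and there are no cusp forms of weight $2$ and level $1$, the Hecke eigenbasis of $S_2(\Gamma_0(q))$ is exactly $S_2(q)^*$, so the Petersson formula (see e.g.~\cite[Ch. 14]{ik}) applies to it directly, and for the pair $(m,1)$ it gives, up to the normalization conventions,
$$
\sum_{f\in S_2(q)^*}\frac{1}{4\pi\langle f,f\rangle}\lambda_f(m)=\delta(m,1)-2\pi\sum_{c\equiv 0\,(q)}\frac{S(m,1;c)}{c}\,J_1\Bigl(\frac{4\pi\sqrt m}{c}\Bigr).
$$
Using the Weil bound $S(m,1;c)\ll c^{1/2+\eps}$ and $J_1(x)\ll x$ for $0<x\leq1$, the $c$-sum is $\ll\sqrt m\sum_{c\equiv 0\,(q)}c^{-3/2+\eps}\ll_{m,\eps}q^{-3/2+\eps}$, which tends to $0$ for fixed $m$. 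Taking $m=1$ shows $\sum_f(4\pi\langle f,f\rangle)^{-1}\to1$, equivalently $c_q\to1/(4\pi)$; dividing, $\expect_q(\lambda_f(m))=\delta(m,1)+O_m(q^{-3/2+\eps})\to\delta(m,1)=\expect(Y_m)$, as required.

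The one step needing genuine care is the last: bounding the off-diagonal Kloosterman--Bessel contribution in the Petersson formula with a power saving in $q$. This is routine here because $m$ is held fixed, so the implied constant is allowed to depend on it; but it is also the point where the harmonic weighting is indispensable, since it is precisely what makes the Petersson formula available with a clean diagonal main term. Everything else is formal --- the reduction to joint moments, the representation-theoretic evaluation on the $\SU_2(\Cc)$ side, and the multiplicativity bookkeeping on the arithmetic side.
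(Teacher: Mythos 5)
Your argument is correct and is precisely the "well-known consequence of the Petersson formula" that the paper simply cites (to Kowalski--Lau--Soundararajan--Wu, Kowalski--Saha--Tsimerman, and Cogdell--Michel) without writing out: reduction to finite joint moments, Hecke multiplicativity and the Chebychev/$\mathrm{Sym}^\nu$ dictionary, Schur orthogonality on the $\SU_2(\Cc)$ side, and the Petersson formula with the Weil bound on the arithmetic side, using that prime level and weight $2$ rule out oldforms. Nothing to add.
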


\begin{proof}
  This is a well-known consequence of the Petersson formula, see
  e.g.~\cite[Prop. 8]{klsw},~\cite[Appendix]{kst2}
  or~\cite[Prop. 1.9]{cogdell-michel}; here restricting to prime level
  $q$ and weight $2$ also simplifies matters since this ensures that
  the old space of $S_2(q)$ is zero.
\end{proof}

\begin{proposition}[First moment estimate]\label{pr-second-moment}
There exists an absolute constant $A\geq 1$ such that for any real
number $\delta>0$ with $\delta<1/2$, and for any $s\in\Cc$ such that
$\demi+\delta\leq\Reel(s)$, we have
$$
\expect_q(|L(f,s)|)\ll (1+|s|)^A,
$$
where the implied constant depends only on $\delta$.
\end{proposition}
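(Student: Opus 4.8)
The plan is to bound the second moment $\expect_q(|L(f,s)|^2)$ and then deduce the first moment bound by the Cauchy--Schwarz inequality: since $\expect_q$ is a probability measure, $\expect_q(|L(f,s)|)\le\expect_q(|L(f,s)|^2)^{1/2}$, so it suffices to prove $\expect_q(|L(f,s)|^2)\ll(1+|s|)^{2A}$ with an implied constant depending only on $\delta$. First I would dispose of large $\Reel(s)$: when $\Reel(s)\ge 1+\delta$ the Dirichlet series converges absolutely and, using $|\lambda_f(n)|\le d(n)$,
$$
|L(f,s)|\le\sum_{n\ge1}\frac{d(n)}{n^{1+\delta}}=\zeta(1+\delta)^2\ll\delta^{-2}.
$$
So one may assume $\demi+\delta\le\Reel(s)\le 1+\delta$, a bounded range in which the analytic conductor of $L(f,s)$ is $\mathfrak q\asymp q(1+|s|)^2$ and in which the Gamma factors have no poles.

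Next I would invoke an approximate functional equation, e.g.~\cite[Th.~5.3]{ik}. Here one uses that $q$ is prime, the nebentypus trivial and the weight $2$, so that $\Lambda(f,s)=(\sqrt{q}/2\pi)^s\,\Gamma(s+\demi)L(f,s)$ is entire and satisfies $\Lambda(f,s)=\eps_f\Lambda(f,1-s)$ with $|\eps_f|=1$. Choosing the balanced split point $X=\sqrt{\mathfrak q}$, this gives
$$
L(f,s)=\sum_{n\ge1}\frac{\lambda_f(n)}{n^s}\,V_s\Bigl(\frac{n}{\sqrt{\mathfrak q}}\Bigr)+\eps_f\,\gamma(s)\sum_{n\ge1}\frac{\lambda_f(n)}{n^{1-s}}\,W_s\Bigl(\frac{n}{\sqrt{\mathfrak q}}\Bigr),
$$
where $\gamma(s)=(\sqrt{q}/2\pi)^{1-2s}\Gamma(\tdem-s)/\Gamma(s+\demi)$ satisfies, by Stirling, $|\gamma(s)|\ll q^{\demi-\Reel(s)}(1+|s|)^{1-2\Reel(s)}\ll q^{-\delta}$ in our range, and where $V_s,W_s$ are smooth functions bounded by $\ll(1+|s|)^{O(1)}$ and decaying faster than any power of their argument, so that both sums are effectively truncated at $n\ll\sqrt{q}(1+|s|)^{1+o(1)}$.

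Then I would open the square, using $\overline{L(f,s)}=L(f,\bar s)$ since the $\lambda_f(n)$ are real. This reduces $\expect_q(|L(f,s)|^2)$ to bilinear sums of $\expect_q(\lambda_f(m)\lambda_f(n))$ weighted by smooth coefficients of size $\ll d(m)d(n)(1+|s|)^{O(1)}$ and supported on $m,n\ll\sqrt{q}(1+|s|)^{1+o(1)}$. Feeding in the Petersson formula for $S_2(q)$ \cite[Ch.~14]{ik} (with $S_2(q)=S_2(q)^*$ since $q$ is prime), namely
$$
\expect_q(\lambda_f(m)\lambda_f(n))=(1+o(1))\Bigl(\delta_{m=n}+2\pi\sum_{c\equiv0\mods q}\frac{S(m,n;c)}{c}J_1\Bigl(\frac{4\pi\sqrt{mn}}{c}\Bigr)\Bigr),
$$
the diagonal $m=n$ from the main sum contributes $\ll_\delta(1+|s|)^{O(1)}$ because $\sum_n d(n)^2 n^{-2\Reel(s)}\le\zeta(1+2\delta)^4$; the diagonal from the dual sum contributes $\ll|\gamma(s)|^2\cdot\mathfrak q^{\Reel(s)-\demi}(\log q)^{O(1)}(1+|s|)^{O(1)}\ll q^{\demi-\Reel(s)}(\log q)^{O(1)}(1+|s|)^{O(1)}\ll_\delta(1+|s|)^{O(1)}$, uniformly in $q$; and the off-diagonal Kloosterman term is negligible, since every modulus is $\ge q$ while $mn\ll q(1+|s|)^{2+o(1)}$, so that $J_1(4\pi\sqrt{mn}/c)\ll\sqrt{mn}/c$, and inserting Weil's bound for $S(m,n;c)$ a routine estimate of the resulting multiple sum gives $\ll q^{-\Reel(s)+o(1)}(1+|s|)^{O(1)}\ll q^{-1/2-\delta+o(1)}(1+|s|)^{O(1)}$. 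Collecting the three contributions, $\expect_q(|L(f,s)|^2)\ll_\delta(1+|s|)^{2A}$ for a suitable absolute $A$, and the proposition follows.

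The hard part is really only the bookkeeping needed to make the bound uniform in $q$ while allowing $1+|s|$ to be arbitrarily large compared with $q$. The two delicate points are (i) the diagonal of the dual sum together with the factor $|\gamma(s)|^2$, where a spurious $(\log q)^{O(1)}$ must be absorbed by the power saving $q^{-\delta}$ coming from $\Reel(s)>\demi$, and (ii) the off-diagonal Kloosterman term, where it is precisely the balanced choice $X=\sqrt{\mathfrak q}$ that forces the resulting power of $q$ to be negative; everything else is the by-now-standard second moment computation for $L$-functions in the level aspect, which one could equally quote from \cite{ik} or \cite{cogdell-michel}.
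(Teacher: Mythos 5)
Your approach is the same as the paper's in essence: the paper also deduces the first moment bound from a second moment estimate via Cauchy--Schwarz, except that it simply quotes the second moment in the level aspect from Kowalski--Michel~\cite[Prop.~5]{km} rather than redoing the computation, so your sketch is in effect a (correct) proof of the ingredient the paper cites. One point in your write-up needs a small repair: when you ``open the square'' after the approximate functional equation, the cross terms between the main sum and the dual sum carry the root number $\eps_f$, which depends on $f$ and cannot be pulled out of $\expect_q$, so the Petersson formula does not apply to them as stated. The standard fixes are either to bound $|L(f,s)|^2\leq 2|S_1|^2+2|\gamma(s)|^2|S_2|^2$ before averaging (so that $|\eps_f|=1$ disappears and only the two clean bilinear forms you analyse remain), or to use $\eps_f=-\sqrt{q}\,\lambda_f(q)$ and the Hecke relations to fold $\eps_f$ into the coefficients before applying Petersson. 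With that one-line adjustment your diagonal/dual-diagonal/off-diagonal analysis, including the uniformity in $|s|$ via the analytic conductor, is sound and gives the stated bound.
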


\begin{proof}
  This follows easily, using the Cauchy-Schwarz inequality, from the
  second moment estimate~\cite[Prop. 5]{km} of Kowalski and Michel
  (with $\Delta=0$); although this statement is not formally the same,
  it is in fact a more difficult average (it operates closer to the
  critical line).
\end{proof}

We now prove some additional lemmas.

\begin{lemma}[Polynomial growth]\label{lm-pol-growth}
  For any real number $\sigma>\sigma_0$, we have
$$
\expect\Bigl( \Bigl|\sum_{n\geq 1}Y_nn^{-s}\Bigr|\Bigr)\ll 1+|s|
$$
uniformly for all $s$ such that $\Reel(s)\geq \sigma>\sigma_0$.
\end{lemma}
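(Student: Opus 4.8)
The plan is to bootstrap the uniform second-moment bound for partial sums recorded in Lemma~\ref{lm-random}(3) into a first-moment bound for the full random Dirichlet series $\sum_{n\geq 1}Y_nn^{-s}$, by means of Abel summation. First I would fix a real number $\sigma_1$ with $\sigma_0<\sigma_1<\sigma$ and set $\eta=\sigma-\sigma_1>0$; the implied constant in the final bound will be allowed to depend on $\sigma$ (hence on $\sigma_1$ and on the fixed $\sigma_0$). For $u\geq 1$ write $T(u)=\sum_{n\leq u}Y_nn^{-\sigma_1}$. By Lemma~\ref{lm-random}, the series $\tilde{L}$ converges almost surely at every point of $S$, so in particular $T(u)\to\tilde{L}(\sigma_1)$ almost surely as $u\to+\infty$; consequently the sample path $u\mapsto T(u)$ is almost surely bounded on $[1,+\infty)$.

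Next, for a complex number $s$ with $\Reel(s)\geq\sigma$ I would set $w=s-\sigma_1$, so that $\Reel(w)\geq\eta>0$, and use the elementary Abel summation identity
$$
\sum_{n\leq N}Y_nn^{-s}=\sum_{n\leq N}(Y_nn^{-\sigma_1})n^{-w}=T(N)N^{-w}+w\int_1^N T(u)u^{-w-1}\,du,
$$
valid for every realization and every integer $N\geq 1$. On the almost-sure event on which $T$ is bounded on $[1,+\infty)$ and on which $\sum_n Y_nn^{-s}$ converges (both supplied by Lemma~\ref{lm-random}), letting $N\to+\infty$ sends the boundary term $T(N)N^{-w}$ to $0$ since $\Reel(w)>0$, makes the improper integral converge absolutely for the same reason, and sends the left-hand side to $\tilde{L}(s)$. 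Hence, almost surely,
$$
\tilde{L}(s)=w\int_1^{+\infty}T(u)u^{-w-1}\,du.
$$

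From this the estimate is quick. Taking absolute values, then expectations, and interchanging expectation with the integral by Tonelli's theorem (the integrand being nonnegative) gives
$$
\expect\bigl(\bigl|\tilde{L}(s)\bigr|\bigr)\leq |w|\int_1^{+\infty}\expect\bigl(|T(u)|\bigr)\,u^{-\Reel(w)-1}\,du.
$$
For $1\leq u<2$ we have $T(u)=Y_1=1$, while for $u\geq 2$ the Cauchy--Schwarz inequality together with Lemma~\ref{lm-random}(3), applied with the fixed real exponent $\sigma_1>1/2$, gives $\expect(|T(u)|)\leq\expect(|T(u)|^2)^{1/2}\ll 1$; in either case $\expect(|T(u)|)\ll 1$ uniformly for $u\geq 1$. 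Therefore
$$
\expect\bigl(\bigl|\tilde{L}(s)\bigr|\bigr)\ll |w|\int_1^{+\infty}u^{-\Reel(w)-1}\,du=\frac{|w|}{\Reel(w)}\leq\frac{|w|}{\eta}\ll 1+|s|,
$$
using $|w|=|s-\sigma_1|\leq |s|+\sigma_1$, which is the assertion.

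I do not anticipate a genuine obstacle here: the only points needing care are the passage to the limit $N\to+\infty$ in the Abel identity (controlled by the almost-sure boundedness of the partial sums $T(u)$) and the Fubini--Tonelli interchange (immediate for a nonnegative integrand). It is worth noting where the linear factor $1+|s|$ originates: it is precisely the factor $w=s-\sigma_1$ produced by Abel summation. One could in fact obtain $O(1)$ in place of $1+|s|$ by observing that the whole orthogonal system $(Y_n)_{n\geq 1}$ — not merely its squarefree part — is orthonormal, so that $\expect(|\tilde{L}(s)|^2)=\zeta(2\Reel(s))\ll 1$ directly; but the weaker polynomial growth stated in the lemma is all that is needed in the sequel and is what I would record.
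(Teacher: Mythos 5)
Your proof is correct and follows essentially the same route as the paper: Abel summation against the partial sums $\sum_{n\leq u}Y_nn^{-\sigma_1}$, whose first moments are controlled by Cauchy--Schwarz and the second-moment bound of Lemma~\ref{lm-random}(3), with the factor $1+|s|$ coming from the $w$ produced by summation by parts. The only (harmless) differences are your use of an intermediate abscissa $\sigma_1\in(\sigma_0,\sigma)$ where the paper anchors at $\sigma_0$ itself, and your more explicit justification of the limit $N\to+\infty$ in the Abel identity.
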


\begin{proof}
We write
$$
L(s)=\sum_{n\geq 1}Y_nn^{-s}.
$$
This is almost surely a function holomorphic on the half-plane $S$.
The series
$$
\sum_{n\geq 1}\frac{Y_n}{n^{\sigma_0}}
$$
converges almost surely. Therefore the partial sums
$$
S_u=\sum_{n\leq u}\frac{Y_n}{n^{\sigma_0}}
$$
are bounded almost surely. By summation by parts, it follows from the
convergence of the series $L(s)$ that for any $s$ with real part
$\Reel(s)\geq \sigma>\sigma_0$, we have
$$
L(s)=(s-\sigma_0)\int_1^{+\infty} \frac{S_u}{u^{s-\sigma_0+1}}du,
$$
where the integral converges almost surely. Hence almost surely
$$
|L(s)|\leq (1+|s|)\int_1^{+\infty}
\frac{|S_u|}{u^{\sigma-\sigma_1+1}}du.
$$
Fubini's Theorem and the Cauchy-Schwarz inequality then imply
\begin{align*}
  \expect(|L(s)|)&\leq (1+|s|) \int_1^{+\infty}
                   \expect(|S_u|)\frac{du}{u^{\sigma-\sigma_0+1}} \\
                 &\leq (1+|s|)\int_1^{+\infty}\expect(|S_u|^2)^{1/2}
                   \frac{du}{u^{\sigma-\sigma_0+1}}\ll 1+|s|
\end{align*}
by Lemma~\ref{lm-random} (3).
% &\ll (1+|s|)\int_1^{+\infty}\Bigl(\sum_{n\leq
%                    u}\frac{1}{n^{2\sigma_1}}\Bigr)^{1/2}
%                    \frac{du}{u^{\sigma-\sigma_1+1}} \ll 1+|s|.
% \end{align*}
% [TODO]
\end{proof}

We now consider some elementary approximation statements of the
$L$-functions and of the random Dirichlet series by smoothed partial
sums. For this, we fix once and for all a smooth function
$\varphi\colon [0,+\infty[\to [0,1]$ with compact support such that
$\varphi(0)=1$, and we denote $\hat{\varphi}$ its Mellin transform.
\par
We also fix $T\geq 1$ and a compact interval $I$ in $]1/2,1[$ such
that the compact rectangle $R=I\times [-T,T]\subset \Cc$ is contained
in $S$ and contains $D$ in its interior. We then finally define
$\delta>0$ so that
$$
\min\{\Reel(s)\,\mid\, s\in R\}= \frac{1}{2}+2\delta.
$$

\begin{lemma}\label{lm-random-critical}
  For $N\geq 1$, define the $\hol(D)$-valued random variable
$$
L_{D}^{(N)}=\sum_{n\geq 1}Y_n\varphi\Bigl(\frac{n}{N}\Bigr)n^{-s}.
$$
\par
We then have
%%There exists $\delta>0$ such that
$$
\expect(\|L_{D}-L_{D}^{(N)}\|_{\infty})\ll N^{-\delta}
$$
for $N\geq 1$, where the implied constant depends on $D$.
\end{lemma}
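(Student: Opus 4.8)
The plan is to reduce the sup-norm estimate over $\bar{D}$ to a pointwise second moment estimate on a fixed contour surrounding $\bar{D}$, and then to evaluate that second moment using the orthogonality of the variables $Y_n$.

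First I would set $g=L_D-L_D^{(N)}$ and note that $g$ is almost surely holomorphic on the half-plane $S$: indeed $L_D$ is, by Lemma~\ref{lm-random}~(1), while $L_D^{(N)}$ is a finite Dirichlet polynomial, hence entire. Since $R\subset S$ and $\bar{D}$ lies in the interior of $R$, Cauchy's integral formula gives
$$
g(s)=\frac{1}{2i\pi}\oint_{\partial R}\frac{g(w)}{w-s}\,dw
$$
for every $s\in\bar{D}$, and therefore $\|g\|_\infty\leq (2\pi d)^{-1}\oint_{\partial R}|g(w)|\,|dw|$ where $d=\mathrm{dist}(\bar{D},\partial R)>0$. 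Taking expectations and applying Fubini's theorem, it is then enough to prove the pointwise bound $\expect(|g(w)|)\ll N^{-\delta}$ uniformly for $w$ on $\partial R$; recall that $\Reel(w)\geq\tfrac12+2\delta$ for such $w$ by the definition of $\delta$.

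To get this bound, fix $w$ with $\sigma:=\Reel(w)>\tfrac12$ and consider the partial sums $g_U(w)=\sum_{n\leq U}Y_n(1-\varphi(n/N))n^{-w}$. As $U\to+\infty$ these converge almost surely to $g(w)$: the smoothed contribution $\sum_{n\leq U}Y_n\varphi(n/N)n^{-w}$ is eventually constant because $\varphi$ has compact support, and $\sum_{n\leq U}Y_nn^{-w}$ converges almost surely to $L_D(w)$ by Lemma~\ref{lm-random}~(2) since $\sigma>\sigma_0$. Because the $X_p$ are independent and the Chebychev polynomials $(U_\nu)_{\nu\geq 0}$ are orthonormal for the Sato--Tate measure, one has $\expect(Y_nY_m)=\delta(n,m)$ for all $n,m\geq 1$, so that $\expect(|g_U(w)|^2)=\sum_{n\leq U}(1-\varphi(n/N))^2n^{-2\sigma}$; letting $U\to+\infty$ and invoking Fatou's lemma gives
$$
\expect(|g(w)|^2)\leq\sum_{n\geq 1}(1-\varphi(n/N))^2n^{-2\sigma}.
$$
Since $\varphi$ is smooth with $\varphi(0)=1$ we have $|1-\varphi(x)|\ll\min(1,x)$; splitting at $n=N$ and using $\tfrac12<\sigma<1$, the right-hand side is $\ll N^{-2}\sum_{n\leq N}n^{2-2\sigma}+\sum_{n>N}n^{-2\sigma}\ll N^{1-2\sigma}$. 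By the Cauchy--Schwarz inequality, $\expect(|g(w)|)\leq\expect(|g(w)|^2)^{1/2}\ll N^{(1-2\sigma)/2}\leq N^{-2\delta}$ for $w$ on $\partial R$. Substituting into the reduction above, and using that $\partial R$ has finite length, yields $\expect(\|L_D-L_D^{(N)}\|_\infty)\ll N^{-2\delta}\leq N^{-\delta}$, with implied constant depending only on $D$ (through $R$, $\delta$, $d$ and the fixed auxiliary function $\varphi$).

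The one step that needs care is the second moment identity: the series defining $g(w)$ converges only conditionally (almost surely), so the square of the full series cannot be expanded term by term. Passing through the partial sums $g_U(w)$ and using Fatou's lemma, whose one-sided inequality is all that is required, avoids this; alternatively one checks that $(g_U(w))_U$ is Cauchy in $L^2(\Omega)$ using $\expect(Y_nY_m)=\delta(n,m)$ once more. The remaining ingredients, namely the Cauchy-formula reduction and the elementary sum estimates, are routine. I note that one could also argue via the Mellin transform $\hat\varphi$ of $\varphi$, writing $g$ as a contour integral of $\hat\varphi(z)N^zL_D(s+z)$ and shifting past the simple pole of $\hat\varphi$ at $z=0$, at the cost of invoking the polynomial-growth bound of Lemma~\ref{lm-pol-growth}; the second moment route above is marginally more self-contained.
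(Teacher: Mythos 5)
Your proof is correct, but it takes a genuinely different route from the paper's. The paper also reduces to $\partial R$ via Cauchy's formula, but then represents $L-L^{(N)}$ as a shifted Mellin integral $-\frac{1}{2i\pi}\int_{(-\delta)}L(s+w)\hat{\varphi}(w)N^w\,dw$ and bounds $\expect(|L(s+w)|)$ on the line $\Reel = \sigma-\delta \geq \tfrac12+\delta$ using the polynomial-growth estimate of Lemma~\ref{lm-pol-growth} (itself resting on the second-moment bound of Lemma~\ref{lm-random}~(3) plus partial summation). You instead compute the second moment of the tail $\sum_n Y_n(1-\varphi(n/N))n^{-w}$ exactly, using the full orthonormality $\expect(Y_nY_m)=\delta(n,m)$ for \emph{all} $n,m$ --- a correct consequence of Schur orthogonality for the irreducible characters $\Tr\,\mathrm{Sym}^{\nu}$ of $\SU_2(\Cc)$ and independence of the $X_p$, though note it is genuinely stronger than what the paper states (orthonormality only for squarefree indices in Lemma~\ref{lm-random}~(2)), so it deserves the one-line justification you give. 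Your handling of the conditional convergence via partial sums and Fatou is exactly the care the step requires. What each approach buys: yours is more self-contained (no Mellin transform, no Lemma~\ref{lm-pol-growth}) and yields the slightly stronger exponent $N^{-2\delta}$; the paper's Mellin route is structurally parallel to the treatment of the actual $L$-functions in Lemma~\ref{lm-critical}, where only a first-moment bound (Proposition~\ref{pr-second-moment}) is available and no exact second-moment identity can be exploited, so the two lemmas there share a single template.
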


\begin{proof}
We again write
$$
L(s)=\sum_{n\geq 1}Y_nn^{-s}
$$
when we wish to view the Dirichlet series as defined and holomorphic
(almost surely) on $S$. 
%% We may also assume below that $D=R$.
\par
 %  The first step is to apply the smoothing process of
%   Proposition~\ref{pr-smooth-sum} in
%   Appendix~\ref{ch-app-complex}. The random Dirichlet series
% $$
% Z(s)=\sum_{n\geq 1}X_n n^{-s}
% $$
% converges almost surely for $\Reel(s)>1/2$. 
For any $s$ in the rectangle $R$, we have almost surely the
representation
\begin{equation}\label{eq-mellin-2}
  L(s)-L^{(N)}(s)=-\frac{1}{2i\pi}\int_{(-\delta)}
  L(s+w)\hat{\varphi}(w)N^wdw
\end{equation}
by standard contour integration.\footnote{\ Here and below, it is
  important that the ``almost surely'' property holds for \emph{all}
  $s$, which is the case because we work with random holomorphic
  functions, and not with particular evaluations of these random
  functions at specific points $s$.}
\par
We also have almost surely for any $v$ in $D$ the Cauchy formula
$$
L_D(v)-L_{D}^{(N)}(v)=\frac{1}{2i\pi}\int_{\partial R}
(L(s)-L^{(N)}(s))\frac{ds}{s-v},
$$
where the boundary of $R$ is oriented counterclockwise. The definition
of the rectangle $R$ ensures that $|s-v|^{-1}\gg 1$ for $v\in D$ and
$s\in \partial R$, and therefore
$$
\|L_D-L_{D}^{(N)} \|_{\infty} \ll \int_{\partial R}
|L(s)-L^{(N)}(s)|\, |ds|.
$$
Using~(\ref{eq-mellin-2}) and writing $w=-\delta+iu$ with $u\in\Rr$,
we obtain
$$
\|L_D-L_{D}^{(N)} \|_{\infty}\ll N^{-\delta}\int_{\partial
  R}\int_{\Rr} |L(-\delta+iu+s)| \ |\hat{\varphi}(-\delta+iu)||ds|du.
$$
Therefore, taking the expectation, and using Fubini's Theorem, we get
\begin{align*}
  \expect(\|L_D-L_{D}^{(N)}\|_{\infty}) 
  &\ll N^{-\delta} \int_{\partial
    R}\int_{\Rr} \expect\bigl(|L(-\delta+iu+s)|\bigr) \
    |\hat{\varphi}(-\delta+iu)||ds|du\\
  & \ll N^{-\delta} \sup_{s=\sigma+it\in
    R} \int_{\Rr} \expect\bigl(|L(-\delta+iu+\sigma+it)|\bigr) \
    |\hat{\varphi}(-\delta+iu)|du.
\end{align*}
We therefore need to bound
$$
\int_{\Rr}\expect\bigl(|L(-\delta+iu+\sigma+it)|\bigr) \
|\hat{\varphi}(-\delta+iu)|du.
$$
for some fixed $\sigma+it$ in the compact rectangle $R$.
% We take
% $$
% \delta=\frac{1}{2}(\min S-1/2)>0
% $$
% (since $S$ is compact in $]1/2,1[$), so that
% $$
% -\delta+\sigma>1/2,\quad\quad 0<\delta<1.
% $$
The real part of the argument $-\delta+iu+\sigma+it$ is
$\sigma-\delta\geq \demi+\delta$ by definition of $\delta$, and hence
$$
\expect(|L(-\delta+iu+\sigma+it)|)\ll 1+|-\delta+iu+\sigma+it|\ll
1+|u|
$$
uniformly for $\sigma+it\in R$ and $u\in\Rr$ by
Lemma~\ref{lm-pol-growth}. Since $\hat{\varphi}$ decays faster than
any polynomial at infinity, we conclude that
$$
\int_{\Rr}\expect\bigl(|L(-\delta+iu+\sigma+it)|\bigr) \
|\hat{\varphi}(-\delta+iu)|du \ll 1
$$
uniformly for $s=\sigma+it\in R$, and the result follows.
\end{proof}

We proceed similarly for the $L$-functions.

\begin{lemma}\label{lm-critical}
  For $N\geq 1$ and $f\in S_2(q)^*$, define
$$
L^{(N)}(f,s)=\sum_{n\geq
  1}\lambda_f(n)\varphi\Bigl(\frac{n}{N}\Bigr)n^{-s},
$$
and define $\A{L}_{q}^{(N)}$ to be the $\hol(D)$-valued random
variable mapping $f$ to $L^{(N)}(f,s)$ restricted to $D$.  We then
have
$$
\expect_q(\|\A{L}_{q}-\A{L}_{q}^{(N)}\|_{\infty})\ll N^{-\delta}
$$
for $N\geq 1$ and all $q$.
\end{lemma}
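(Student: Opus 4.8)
The plan is to mirror the proof of Lemma~\ref{lm-random-critical} essentially line by line, with the random Dirichlet series $\sum_n Y_n n^{-s}$ replaced by the Hecke $L$-function $L(f,\cdot)$, its smoothed truncation replaced by $L^{(N)}(f,\cdot)$, and the role of the polynomial growth estimate of Lemma~\ref{lm-pol-growth} played instead by the first moment estimate of Proposition~\ref{pr-second-moment}. The only genuinely arithmetic input is Proposition~\ref{pr-second-moment}; everything else is the same contour shift plus Cauchy formula manipulation as before, now performed deterministically for each fixed $f$.

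First I would record the contour representation. Recall that $\hat\varphi$ is holomorphic for $\Reel(w)>0$, continues meromorphically to $\Cc$ with a single simple pole at $w=0$ of residue $\varphi(0)=1$, and decays faster than any polynomial in vertical strips. For $f\in S_2(q)^*$, the function $L(f,s)$ is entire and of polynomial growth in vertical strips (by the functional equation and Phragm\'en--Lindel\"of), while $L^{(N)}(f,s)$ is a finite Dirichlet polynomial, hence entire. Starting from the Mellin inversion identity $L^{(N)}(f,s)=\frac{1}{2i\pi}\int_{(c)}L(f,s+w)\hat\varphi(w)N^w\,dw$, valid for $c$ large enough that the series converges absolutely, and shifting the line of integration to $\Reel(w)=-\delta$, the pole of $\hat\varphi$ at $w=0$ contributes the residue $L(f,s)$ (since $N^0=1$), so that for every $s$ in the rectangle $R$,
\begin{equation*}
  L(f,s)-L^{(N)}(f,s)=-\frac{1}{2i\pi}\int_{(-\delta)}L(f,s+w)\hat\varphi(w)N^w\,dw
\end{equation*}
by standard contour integration; the shift is legitimate because $L(f,s+w)$ has polynomial growth and $\hat\varphi$ compensates it on the horizontal segments.

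Then, exactly as in Lemma~\ref{lm-random-critical}, I would apply the Cauchy integral formula on $\partial R$ — using that $|s-v|$ is bounded below for $v\in D$ and $s\in\partial R$, since $D$ lies in the interior of $R$ — to obtain $\|\A{L}_q-\A{L}_q^{(N)}\|_{\infty}\ll\int_{\partial R}|L(f,s)-L^{(N)}(f,s)|\,|ds|$. Inserting the contour representation with $w=-\delta+iu$, so that $|N^w|=N^{-\delta}$, taking the expectation $\expect_q$, and using Fubini's Theorem reduces the whole estimate to bounding, uniformly in $q$ and in $s=\sigma+it\in R$, the quantity
$$
\int_{\Rr}\expect_q\bigl(|L(f,-\delta+iu+\sigma+it)|\bigr)\,|\hat\varphi(-\delta+iu)|\,du.
$$
The argument $-\delta+iu+\sigma+it$ has real part $\sigma-\delta\geq\demi+\delta$ by the definition of $\delta$, so Proposition~\ref{pr-second-moment} gives $\expect_q(|L(f,-\delta+iu+\sigma+it)|)\ll(1+|u|)^A$ with an implied constant depending only on $\delta$ — in particular independent of $q$ — since $\sigma+it$ ranges over the fixed compact rectangle $R$. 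As $\hat\varphi$ decays faster than any polynomial, the $u$-integral is $\ll 1$ uniformly, and collecting the factor $N^{-\delta}$ yields $\expect_q(\|\A{L}_q-\A{L}_q^{(N)}\|_{\infty})\ll N^{-\delta}$ for all $q$.

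The main obstacle — indeed the only place where anything beyond formal manipulation is used — is the uniformity in $q$ of the first moment of $|L(f,s)|$ just to the right of the critical line: for a single $L$-function such a bound would be a subconvexity-type statement, but here it is supplied (via Cauchy--Schwarz) by the second moment estimate of Kowalski--Michel, which is exactly Proposition~\ref{pr-second-moment}. The remaining analytic point, the justification of the contour shift, is routine given the standard holomorphy and growth properties of $L(f,s)$.
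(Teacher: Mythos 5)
Your proposal is correct and follows essentially the same route as the paper: the Mellin/contour representation \eqref{eq-approx}, the Cauchy formula on $\partial R$ to control the sup norm on $D$, Fubini, and then Proposition~\ref{pr-second-moment} (uniform in $q$) together with the rapid decay of $\hat{\varphi}$ to bound the $u$-integral. Your added justification of the contour shift (entirety and polynomial growth of $L(f,s)$ via the functional equation and Phragm\'en--Lindel\"of, plus the simple pole of $\hat{\varphi}$ at $w=0$ with residue $\varphi(0)=1$) is a detail the paper leaves implicit, and is accurate.
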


\begin{proof}
  For any $s\in R$, we have the representation
\begin{equation}\label{eq-approx}
  L(f,s)-L^{(N)}(f,s)=-\frac{1}{2i\pi}\int_{(-\delta)}
  L(f,s+w)\hat{\varphi}(w)N^wdw.
\end{equation}
and for any $v$ with $\Reel(v)>1/2$, Cauchy's theorem gives
$$
L(f,v)-L^{(N)}(f,v)=\frac{1}{2i\pi}\int_{\partial R}
(L(f,s)-L^{(N)}(f,s))\frac{ds}{s-v},
$$
where the boundary of $R$ is oriented counterclockwise. As in the
previous argument, we deduce that 
% , and using $|s-v|^{-1}\gg 1$ for $v\in D$ and $s\in \partial R$, we
% deduce that the random variable
% $\|\A{L}_{q}-\A{L}_{q}^{(N)} \|_{\infty}$, which takes the value
% $$
% \sup_{s\in D}|L(f,s)-L^{(N)}(f,s)|
% $$
% at $f\in S_2(q)^*$, satisfies
$$
\|\A{L}_{q}-\A{L}_{q}^{(N)} \|_{\infty}\ll \int_{\partial R} |L(f,s)-L^{(N)}
(f,s)||ds|
$$
for $f\in S_2(q)^*$.  Taking the expectation with respect to $f$ and
changing the order of summation and integration leads to 
\begin{align}
  \expect_q\Bigl( \|\A{L}_{q}-\A{L}_{q}^{(N)}\|_{\infty}\Bigr) 
  &\ll
    \int_{\partial R}
    \expect_q\bigl(|L(f,s)-L^{(N)}(f,s)|\bigr)|ds|
    \notag\\
  &\ll  \sup_{s\in \partial R}
    \expect_q\bigl(|L(f,s)-L^{(N)}(f,s)|\bigr).
    \label{eq-last-ref}
\end{align}
% We take again $\delta=\tfrac{1}{2}(\min S-1/2)>0$, so that
% $0<\delta<1$. For any fixed $s\in \partial R$, we have
% $$
% -\delta+\sigma\geq \frac{1}{2}+\delta>\frac{1}{2}.
% $$ 
Applying~(\ref{eq-approx}) and using again Fubini's Theorem, we obtain
$$
\expect_q\bigl(|L(f,s)-L^{(N)}(f,s)|\bigr) \ll N^{-\delta} \int_{\Rr}
|\hat{\varphi}(-\delta+iu)|
\expect_q\bigl(|L(f,-\delta+iu+\sigma+it)|\bigr)du
$$
for $s\in\partial R$. Since $\sigma-\delta \geq \demi+\delta$, we get
\begin{equation}\label{eq-zeta-mean}
  \expect_q\bigl(|L(f,-\delta+iu+\sigma+it)|\bigr)
  % =\frac{1}{2T}\int_{-T}^T|\zeta(\sigma-\delta+i(t+u))|dt
  \ll (1+|u|)^A
\end{equation}
by Proposition~\ref{pr-second-moment}, where $A$ is an absolute
constant. Hence
\begin{equation}\label{eq-zeta-mean-app}
  \expect_q\bigl(|L(f,s)-L^{(N)}(f,s)|\bigr) \ll N^{-\delta}
  \int_{\Rr}|\hat{\varphi}(-\delta+iu)|(1+|u|)^Adu\ll N^{-\delta}.
\end{equation}
We conclude from~(\ref{eq-last-ref}) that
$$
\expect_q\Bigl( \|\A{L}_{q}-\A{L}_{q}^{(N)}\|_{\infty}\Bigr) \ll
N^{-\delta},
$$
as claimed.
\end{proof}

\begin{proof}[Proof of Theorem~\ref{th-equidistribution}]
  A simple consequence of the definition of convergence in law shows
  that it is enough to prove that for any bounded and Lipschitz
  function $f\,:\, \hol(D)\lra \Cc$, we have
$$
\expect_q(f(\A{L}_{q}))\lra \expect(f(L_D))
$$
as $q\ra +\infty$ (see~\cite[p. 16, (ii)$\Rightarrow$ (iii) and (1.1),
p. 8]{billingsley}). To prove this, we use the Dirichlet series
expansion of $L_D$ given by Lemma~\ref{lm-random} (2).
\par
Let $N\geq 1$ be some integer to be chosen later. Let
$$
\A{L}_{q}^{(N)}=\sum_{n\geq
  1}\lambda_f(n)n^{-s}\varphi\Bigl(\frac{n}{N}\Bigr)
$$
(viewed as random variable defined on $S_2(q)^*$) and
$$
L_{N}=\sum_{n\geq 1}Y_n n^{-s}\varphi\Bigl(\frac{n}{N}\Bigr)
$$
be the smoothed partial sums of the Dirichlet series, as in
Lemmas~\ref{lm-critical} and~\ref{lm-random-critical}.
\par
We then write
\begin{multline*}
  |\expect_q(f(\A{L}_{q}))-\expect(f(L))| \leq
  |\expect_q(f(\A{L}_{q})-f(\A{L}_{q}^{(N)}))|+\\
  |\expect_q(f(\A{L}_{q}^{(N)}))-\expect(f(L^{(N)}))|+ |\expect(f(L^{(N)})-f(L))|.
\end{multline*}
\par
Since $f$ is a Lipschitz function on $\hol(D)$, there exists a
constant $C\geq 0$ such that
$$
|f(x)-f(y)|\leq C\|x-y\|_{\infty}
$$
for all $x$, $y\in\hol(D)$. Hence we have
\begin{multline*}
  |\expect_q(f(\A{L}_{q}))-\expect(f(L))| \leq
  C\expect_q(\|\A{L}_{q}-\A{L}_{q}^{(N)}\|_{\infty})+\\
  |\expect_q(f(\A{L}_{q}^{(N)}))-\expect(f(L^{(N)}))|+
  C\expect(\|L^{(N)}-L\|_{\infty}).
\end{multline*}
\par
Fix $\eps>0$.  Lemmas~\ref{lm-critical} and~\ref{lm-random-critical}
together show that there exists some $N\geq 1$ such that
$$
\expect_q(\|\A{L}_{q}-\A{L}_{q}^{(N)}\|_{\infty})<\eps
$$
for all $q\geq 2$ and
$$
\expect(\|L^{(N)}-L\|_{\infty})<\eps.
$$
We \emph{fix} such a value of $N$. By Proposition~\ref{pr-spectral}
(and composition with a continuous function), the random variables
$\A{L}_{q}^{(N)}$ (which are Dirichlet polynomials) converge in law to
$L^{(N)}$ as $q\ra +\infty$. We deduce that we have
$$
|\expect_q(f(\A{L}_{q}))-\expect(f(L))| < 4\eps
$$
for all $q$ large enough. This finishes the proof.
\end{proof}

\section{Proof of Theorem~\ref{th-support}}
\label{sec-proof2}

For the computation of the support of the random Dirichlet series
$L(s)$, we apply a trick to exploit the analogous result known for the
case of the Riemann zeta function.
% \par
% Define
% $$
% P(s)=\sum_{p}\frac{X_p}{p^s}.
% % ,\quad\quad \tilde{P}(s)=\sum_{p}\sum_{k\geq 1}\frac{X_p^k}{p^{ks}}.
% $$
% The series converge almost surely for $\Reel(s)>1/2$. 
% We claim that the support of the distribution of $\tilde{P}$, when
% viewed as an $\hol(D)$-valued random variable, is equal to
% $\hol(D)$. Let us first assume this.
% \par
% Since $Z=\exp(\tilde{P})$, we deduce by composition that the support
% of $Z$ is the closure of the set of functions of the form $e^g$, where
% $g\in\hol(D)$. But this last set is precisely $\hol(D)^{\times}$, and
% a well-known lemma of Hurwitz shows that its closure in $\hol(D)$ is
% $\hol(D)^{\times}\cup\{0\}$.w
% \par
% Using a well-known result concerning the support of random series
% (see, e.g.,~\cite[]{}), the following proposition imples imply that
% the support of the random Dirichlet series $P$ is $\hol(D)$.
We denote $\SUhat$ the product of copies of the unit circle indexed by
primes, so an element $(x_p)$ of $\SUhat$ is a family of matrices in
$\SU_2(\Cc)$ indexed by $p$.

The assumptions on $D$ in Theorem~\ref{th-support}\footnote{\ These
  assumptions could be easily weakened, as has been done for Voronin's
  Theorem.} imply that there exists $\tau$ be such that $1/2<\tau<1$
and $r>0$ such that
$$
D=\{s\in\Cc\,\mid\, |s-\tau|\leq r\}\subset \{s\in\Cc\,\mid\,
1/2<\Reel(s)<1\}.
$$

\begin{lemma}\label{lm-support-1}
  Let $N$ be an arbitrary positive real number.  The set of all series
$$
\sum_{p>N}\frac{\Tr(x_p)}{p^{s}}, \quad\quad (x_p)\in \SUhat
$$
which converge in $\hol(D)$ is dense in the subspace $\hol_{\Rr}(D)$.
% =\{f\in\hol(D)\,\mid\, \overline{f(s)}=f(\bar{s})\text{
%   for all }s\in D\}.
% $$
\end{lemma}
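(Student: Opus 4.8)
The plan is to deduce this from the analogous, and known, density statement for the Riemann zeta function, via a symmetrization using complex conjugation. Concretely, the input I would use is the following: since $D$ is a disc contained in the strip $\demi<\Reel(s)<1$, the family of all series $\sum_{p>N}z_pp^{-s}$ with $z_p$ on the unit circle which converge in $\hol(D)$ is dense in $\hol(D)$ (see~\cite{bagchi} or~\cite[Ch.~3]{proba}; this is where the real work sits, namely a rearrangement theorem for series in a Hilbert space together with the fact that $\sum_p|\ell(p^{-s})|=+\infty$ for every nonzero continuous linear functional $\ell$ on $\hol(D)$).

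Next I would set up the conjugation map $\iota\colon\hol(D)\to\hol(D)$, $(\iota h)(s)=\overline{h(\bar s)}$. Since $D$, hence $\bar D$, is stable under $s\mapsto\bar s$, this is a well-defined $\Rr$-linear, $\Cc$-antilinear isometry of $\hol(D)$ whose fixed set is exactly $\hol_{\Rr}(D)$, and it fixes each generator $s\mapsto p^{-s}$ because $\overline{p^{-\bar s}}=p^{-s}$ for a (real) prime $p$. Given $\psi\in\hol_{\Rr}(D)$ and $\eps>0$, I would apply the zeta input to $\tfrac12\psi$, producing $(z_p)_{p>N}$ on the unit circle with $g(s):=\sum_{p>N}z_pp^{-s}$ convergent in $\hol(D)$ and $\|g-\tfrac12\psi\|_{\infty}<\eps$. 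Putting $z_p=e^{i\theta_p}$ and $x_p=\mathrm{diag}(e^{i\theta_p},e^{-i\theta_p})\in\SU_2(\Cc)$, so that $(x_p)\in\SUhat$ and $\Tr(x_p)=z_p+\bar z_p$, applying the continuous antilinear map $\iota$ termwise shows $(\iota g)(s)=\sum_{p>N}\bar z_pp^{-s}$ converges in $\hol(D)$, hence so does
$$
g+\iota g=\sum_{p>N}\frac{z_p+\bar z_p}{p^{s}}=\sum_{p>N}\frac{\Tr(x_p)}{p^{s}}.
$$
Because $\psi\in\hol_{\Rr}(D)$ means $\iota\psi=\psi$ and $\iota$ is an isometry, I would then conclude
$$
\Bigl\|\sum_{p>N}\frac{\Tr(x_p)}{p^{s}}-\psi\Bigr\|_{\infty}
=\Bigl\|(g-\tfrac12\psi)+\iota(g-\tfrac12\psi)\Bigr\|_{\infty}
\le 2\,\|g-\tfrac12\psi\|_{\infty}<2\eps ,
$$
and note that $g+\iota g\in\hol_{\Rr}(D)$ since it is $\iota$-invariant; as $\psi$ and $\eps$ are arbitrary, this gives the asserted density.

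The only genuine obstacle is the input density theorem for $\zeta$; the argument above is otherwise formal, the two points to check being that $\iota$ is an isometry of $\hol(D)$ fixing the functions $p^{-s}$, and that any trace $\Tr(x_p)$ with $x_p\in\SU_2(\Cc)$ equals $z_p+\bar z_p$ for some $z_p$ with $|z_p|=1$ (clear, as $\Tr(x_p)\in[-2,2]$). If one prefers to start from the variant of the zeta statement with $|z_p|\le 1$, one simply takes $x_p$ with $\Tr(x_p)=2\Reel(z_p)$ directly; the displayed identity $g+\iota g=\sum_{p>N}\Tr(x_p)p^{-s}$ and the estimate are unchanged.
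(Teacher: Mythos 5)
Your proposal is correct and follows essentially the same route as the paper: both invoke Bagchi's density theorem for the series $\sum_{p>N}e^{i\theta_p}p^{-s}$ applied to $\varphi/2$, and then symmetrize by complex conjugation to produce the trace $e^{i\theta_p}+e^{-i\theta_p}=\Tr(x_p)$ of a diagonal matrix in $\SU_2(\Cc)$. The only cosmetic difference is that you package the conjugation step as an explicit antilinear isometry $\iota$ of $\hol(D)$, whereas the paper writes out the conjugated series directly.
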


In the proof and the next, we allow ourselves the luxury of writing
sometimes $\|\varphi(s)\|_{\infty}$ instead of $\|\varphi\|_{\infty}$.

\begin{proof}
  Bagchi~\cite[Lemma 5.2.10]{bagchi} proves (using results of complex
  analysis due to Bernstein, Poly\'a and others) that the set of
  series
$$
\sum_{p>N}\frac{e^{i\theta_p}}{p^{s}}, \quad\quad \theta_p\in\Rr
$$
that converge in $\hol(D)$ is dense in $\hol(D)$ (precisely, he proves
this for $N=1$, but the same proof applies to any value of $N$). If
$\varphi\in\hol_{\Rr}(D)$ and $\eps>0$, we can therefore find real
numbers $(\theta_p)$ such that
$$
\Bigl\| \frac{\varphi(s)}{2}-\sum_{p>N}\frac{e^{i\theta_p}}{p^{s}}
\Bigr\|_{\infty}<\frac{\eps}{2}.
$$
It follows then that
$$
\Bigl\|
\frac{\overline{\varphi(\bar{s})}}{2}-\sum_{p>N}\frac{e^{-i\theta_p}}{p^{s}},
\Bigr\|_{\infty}<\frac{\eps}{2},
$$
hence (since $\varphi\in\hol_{\Rr}(D)$) that
$$
\Bigl\|
\varphi(s)-\sum_{p>N}\frac{e^{i\theta_p}+e^{-i\theta_p}}{p^{s}}
\Bigr\|_{\infty}<\eps,
$$
which gives the result since
$$
e^{i\theta_p}+e^{-i\theta}=\Tr\begin{pmatrix}e^{i\theta_p}&0\\
0&e^{-i\theta_p}
\end{pmatrix}
$$
is the trace of a matrix in $\SU_2(\Cc)$.
\end{proof}

We will use this to prove:

\begin{proposition}
  The support of the law of
$$
\log L_D(s)=-\sum_{p} \log\det(1-X_pp^{-s})
$$
in $\hol(D)$ is $\hol_{\Rr}(D)$.
\end{proposition}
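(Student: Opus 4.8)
The plan is to establish the two inclusions separately. The inclusion $\supp(\log L_D)\subseteq\hol_{\Rr}(D)$ is immediate: by Lemma~\ref{lm-random} (and its proof) the series $-\sum_p\log\det(1-X_pp^{-s})$ converges almost surely in $\hol(D)$, and since the eigenvalues of $X_p\in\SU_2(\Cc)$ are of the form $e^{\pm i\theta}$, each summand has real Dirichlet coefficients, hence lies in $\hol_{\Rr}(D)$; as $\hol_{\Rr}(D)$ is a closed subspace of $\hol(D)$, the sum lies in it almost surely, and therefore so does the support. For the reverse inclusion I would show that for every $\psi\in\hol_{\Rr}(D)$ and every $\eps>0$ one has $\proba(\|\log L_D-\psi\|_\infty<\eps)>0$, by splitting the Euler product into three ranges of primes. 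Fix a real $\sigma_0$ with $1/2<\sigma_0<\tau-r$, so that $\bar D\subset\{\Reel(s)>\sigma_0\}$, and for $x\in\SU_2(\Cc)$ write $-\log\det(1-xp^{-s})=\Tr(x)p^{-s}+g_p(s,x)$ with $g_p(s,x)=\sum_{k\geq 2}k^{-1}\Tr(x^k)p^{-ks}$; using $|\Tr(x^k)|\leq 2$ one gets the uniform bound $|g_p(s,x)|\leq C(\sigma_0)p^{-2\sigma_0}$ for $s\in\bar D$ and $x\in\SU_2(\Cc)$, with $2\sigma_0>1$, and the map $x\mapsto\bigl(-\log\det(1-xp^{-s})\bigr)\in\hol(D)$ is continuous. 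Almost surely $\log L_D=\sum_p\bigl(-\log\det(1-X_pp^{-s})\bigr)$ in $\hol(D)$.

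Next I would choose $N$ so large that $C(\sigma_0)\sum_{p>N}p^{-2\sigma_0}$ is as small as desired, and set $a_0=-2\sum_{p\leq N}\log(1-p^{-s})\in\hol_{\Rr}(D)$, which is the value at $(x_p)_{p\leq N}=(1,\dots,1)$ (identity matrices) of the continuous map $(x_p)_{p\leq N}\mapsto A:=\sum_{p\leq N}\bigl(-\log\det(1-x_pp^{-s})\bigr)$. Since the Haar measure on $\prod_{p\leq N}\SU_2(\Cc)$ gives positive mass to every nonempty open set, $a_0$ lies in the support of the random function $A$, so $\proba(\|A-a_0\|_\infty<\eta)>0$ for every $\eta>0$. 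Since $\psi-a_0\in\hol_{\Rr}(D)$, Lemma~\ref{lm-support-1} applied with this $N$ produces matrices $x_p\in\SU_2(\Cc)$ for $p>N$ such that $\sum_{p>N}\Tr(x_p)p^{-s}$ converges in $\hol(D)$ and approximates $\psi-a_0$ arbitrarily well; truncating at a sufficiently large $M>N$, the Dirichlet polynomial $\sum_{N<p\leq M}\Tr(x_p)p^{-s}$ still approximates $\psi-a_0$ arbitrarily well.

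It then remains to intersect three independent events. First, $\{\|A-a_0\|_\infty<\eta\}$ has positive probability and depends only on $(X_p)_{p\leq N}$. Second, because $\Tr(X_p)$ is Sato--Tate distributed with support $[-2,2]$ and the $X_p$ are independent, for $\delta>0$ small the event $\{|\Tr(X_p)-\Tr(x_p)|<\delta\text{ for all }N<p\leq M\}$ has positive probability, depends only on $(X_p)_{N<p\leq M}$, and on it $\|\sum_{N<p\leq M}(\Tr(X_p)-\Tr(x_p))p^{-s}\|_\infty\leq\delta\sum_{N<p\leq M}p^{-\sigma_0}$. Third, since $\sum_p\Tr(X_p)p^{-s}$ converges almost surely in $\hol(D)$ (Lemma~\ref{lm-random}), its tails tend to $0$ almost surely, so for $M$ large the event $\{\|\sum_{p>M}\Tr(X_p)p^{-s}\|_\infty<\eta\}$ depends only on $(X_p)_{p>M}$ and has probability $>1/2$. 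Choosing $N$, then the $x_p$, then $M$, and finally $\eta$ and $\delta$ small enough, the intersection $E$ of these three events has positive probability, and on $E$, writing $\log L_D=A+\sum_{N<p\leq M}\bigl(-\log\det(1-X_pp^{-s})\bigr)+\sum_{p>M}\bigl(-\log\det(1-X_pp^{-s})\bigr)$ and replacing each remaining factor by its linear term at the cost of $C(\sigma_0)\sum_{p>N}p^{-2\sigma_0}$, all the finitely many error terms sum to less than $\eps$; hence $\|\log L_D-\psi\|_\infty<\eps$ on $E$, so $\psi\in\supp(\log L_D)$.

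The only genuinely delicate point is the passage from the density statement of Lemma~\ref{lm-support-1} to a positive-probability statement, which forces one to neutralise the two contributions that cannot be prescribed: the higher-order terms $g_p$, handled by the uniform bound $|g_p|\ll p^{-2\sigma_0}$ together with the choice of $N$, and the contribution $A$ of the small primes, handled by moving the approximation target from $\psi$ to $\psi-a_0$ for one fixed admissible value $a_0$ of $A$. This is precisely the structure of Bagchi's argument for the Riemann zeta function, and everything else is routine bookkeeping with the constants.
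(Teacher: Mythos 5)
Your proof is correct, and its core is the same as the paper's: the same decomposition $-\log\det(1-x_pp^{-s})=\Tr(x_p)p^{-s}+g_p(s,x_p)$ with the uniform bound $|g_p|\ll p^{-2\sigma_0}$, the same use of Lemma~\ref{lm-support-1} to approximate a shifted target $\psi-a_0$ by the linear terms of the large primes, and the same choice of identity matrices at the small primes. The difference is in how the probabilistic reduction is handled. The paper invokes a general black-box result on the support of an a.s.\ convergent series of independent $\hol(D)$-valued summands (namely that the support of the sum is the closure of the set of convergent sums of points taken from the supports of the summands), which reduces everything to a purely deterministic density statement. You instead prove the positive-probability statement directly, by splitting the primes into three independent blocks and using, respectively, full support of Haar measure on $\prod_{p\leq N}\SU_2(\Cc)$, full support of the Sato--Tate law of $\Tr(X_p)$ on $[-2,2]$ for the finitely many middle primes, and the a.s.\ decay of the tails for $p>M$. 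This is essentially the standard proof of the cited support lemma, unfolded in this special case: your version is self-contained and makes the quantifier order ($N$, then $(x_p)$, then $M$, then $\eta,\delta$) completely explicit, at the cost of length; the paper's version is shorter but leans on the external reference (and, incidentally, your bookkeeping of the small-prime contribution via $a_0=-2\sum_{p\leq N}\log(1-p^{-s})$, which absorbs the $g_p$ terms for $p\leq N$ as well as their linear parts, is cleaner than the paper's definition of $\varphi_1$, which only shifts by the linear terms $2\sum_{p\leq N}p^{-s}$).
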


\begin{proof}
  Since $X_p\in \SU_2(\Cc)$, the function $\log L(s)$ is almost surely
  in the space $\hol_{\Rr}(D)$.  Since the summands are independent, a
  well-known result concerning the support of random series (see,
  e.g.,~\cite[Prop. B.8.7]{proba}) shows that it suffices to prove
  that the set of convergent series
$$
-\sum_{p} \log \det(1-x_pp^{-s}),\quad\quad (x_p)\in\SUhat,
$$
is dense in $\hol_{\Rr}(D)$. Denote $L(s;(x_p))$ this series, when it
converges in $\hol(D)$.
\par
We can write
$$
-\sum_{p} \log \det(1-x_pp^{-s})=
\sum_{p}\frac{\Tr(x_p)}{p^s}+g(s;(x_p))
$$
where $s\mapsto g(s;(x_p))$ is holomorphic in the region
$\Reel(s)> 1/2$. Indeed
$$
g(s;(x_p))=\sum_{p}\log\Bigl(1+\sum_{k\geq 0} \Tr(x_p)^kp^{-(k+2)s}
\Bigr).
$$
\par
Fix $\varphi\in\hol_{\Rr}(D)$ and let $\eps>0$ be fixed. There exists
$N\geq 1$ such that
\begin{equation}\label{eq-correct}
  \Bigl\|s\mapsto \sum_{p>N}\log\Bigl(1+\sum_{k\geq 0}
  \Tr(x_p)^kp^{-(k+2)s} \Bigr)\Bigr\|_{\infty}<\eps
\end{equation}
for \emph{any} $(x_p)\in\SUhat$. Now take $x_p=1\in\SU_2(\Cc)$ for
$p\leq N$ and define
$$
\varphi_1=\varphi+\sum_{p\leq
  N}\frac{\Tr(x_p)}{p^s}=\varphi+2\sum_{p\leq N}\frac{1}{p^s},
$$
which belongs to $\hol_{\Rr}(D)$.  By Lemma~\ref{lm-support-1}, there
exist $x_p$ for $p>N$ in $\SU_2(\Cc)$ such that
$$
\Bigl\| \sum_{p>N}\frac{\Tr(x_p)}{p^s}- \varphi_1(s)
\Bigr\|_{\infty}<\eps.
$$
The left-hand side is the norm of
$$
\log L(s;(x_p))-g(s;(x_p))-\sum_{p\leq
  N}\frac{\Tr(x_p)}{p^s}-\varphi_1(s)= \log
L(s;(x_p))-\varphi(s)-g(x;(x_p)),
$$
and by~(\ref{eq-correct}), we obtain
$$
\|\log L(s;(x_p))-\varphi(s)\|_{\infty}<2\eps.
$$
This implies the lemma.
%  For $N\geq 1$ and
% any $(x_p)\in\Uhat$, let
% $$
% h_N(s)=\sum_{p>N}\sum_{k\geq 2}\frac{x_p^k}{kp^{ks}}.
% $$
% This series converges absolutely for any $s$ such that $\Reel(s)\geq
% 1/2$ and $(x_p)\in\Uhat$, and we have
% $$
% \|h_N\|_{\infty}\leq \sum_{p>N}\sum_{k\geq 2} \frac{1}{kp^{k/2}}\ra 0
% $$
% as $N\ra +\infty$, uniformly with respect to $(x_p)\in \Uhat$.  Fix
% $N$ such that $\|h_N\|_{\infty}<\tfrac{\eps}{2}$ for any
% $(x_p)\in\Uhat$.
% \par
% Now let $x_p=1$ for $p\leq N$ and define $f_0\in\hol(D)$ by
% $$
% f_0(s)=f(s)+\sum_{p\leq N} \log(1-x_pp^{-s}).
% $$
% For any choice of $(x_p)_{p>N}$ such that the series
% $$
% \sum_{p}\frac{x_p}{p^{s}}
% $$
% defines an element of $\hol(D)$, we can then write
% $$
% f(s)+\sum_{p} \log (1-x_pp^{-s})=g_N(s)+f_0(s)+h_N(s),
% $$
% for $s\in D$, where
% $$
% g_N(s)=\sum_{p>N}\frac{x_p}{p^s}.
% $$
% By Proposition~\ref{pr-support-1}, there exists $(x_p)_{p>N}$ such
% that the series $g_N$ converges in $\hol(D)$ and
% $\|g_N+f_0\|_{\infty}<\tfrac{\eps}{2}$. We then have
% $$
% \Bigl\|f+\sum_{p} \log (1-x_pp^{-s})\Bigr|<\eps.
% $$
\end{proof}

Using composition with the exponential function and a lemma of Hurwitz
(see, e.g.,~\cite[3.45]{titchmarsh-fn}) on zeros of limits of
holomorphic functions, we see that the support of the limiting
Dirichlet series $L_D$ in $\hol(D)$ is the union of the zero function
and the set of functions $\varphi\in\hol_{\Rr}(D)$ such that
$\varphi(\sigma)>0$ for $\sigma\in D\cap \Rr$. In particular, this
proves Theorem~\ref{th-support}.

\section{Generalizations}
\label{sec-general}

It is clear from the proof that Bagchi's Theorem should hold in
considerable generality for any family of $L$-functions. Indeed, the
crucial ingredients are the local spectral equidistribution
(Proposition~\ref{pr-spectral}), and the first moment estimate
(Proposition~\ref{pr-second-moment}).
\par
The first result is a qualitative statement that is understood to be
at the core of any definition of ``family'' of $L$-functions (this is
explained in~\cite{families}, but also appears, with a different
terminology, for the families of
Conrey--Farmer--Keating--Rubinstein--Snaith~\cite{cfkrs} and
Sarnak--Shin--Templier~\cite{sst}); it is now know in many
circumstances (indeed, often in quantitative form).
\par
The moment estimate is typically derived from a second-moment bound,
and is also definitely expected to hold for a reasonable family of
$L$-functions, but it has only been proved in much more restricted
circumstances than local spectral equidistribution.  However, it is
very often the case that one can at least prove (using local spectral
equidistribution) a weaker statement: for some $\sigma_1$ such that
$1/2<\sigma_1<1$, the second moment of the $L$-functions satisfies the
analogue of Proposition~\ref{pr-second-moment}; an analogue of
Bagchi's Theorem then follows at least for compact discs in the region
$\sigma_1<\Reel(s)<1$.
\par
As far as universality (i.e., Theorem~\ref{th-support}) is concerned,
one may expect that (using tricks similar to the proof of
Theorem~\ref{th-support}) only two different cases really occur,
depending on whether the coefficients of the $L$-functions are real
(as in our case) or complex (as in the case of vertical translates of
a fixed $L$-function).


\begin{thebibliography}{C}

\bibitem{bagchi} B. Bagchi: \textit{Statistical behaviour and
    universality properties of the Riemann zeta function and other
    allied Dirichlet series}, PhD thesis, Indian Statistical
  Institute, Kolkata, 1981; available at
  \url{library.isical.ac.in/jspui/handle/10263/4256}

\bibitem{billingsley}
P. Billingsley: \textit{Convergence of probability measures}, Wiley
(1968). 

% \bibitem{bruggeman}
% R. Bruggeman: \textit{Fourier coefficients of cusp forms},
% Invent. math.  45 (1978), 1--18.

\bibitem{cogdell-michel} J. Cogdell and P. Michel: \textit{On the
    complex moments of symmetric power $L$-functions at $s = 1$},
  Internat. Math. Res. Notices (2004), 1561--1617.

\bibitem{cdf} B. Conrey, W. Duke and D. Farmer: \textit{The
    distribution of the eigenvalues of Hecke operators}, Acta
  Arith. 78 (1997), 405--409.

\bibitem{cfkrs} J.B. Conrey, D. Farmer, J. Keating, M. Rubinstein
  and N. Snaith: \textit{Integral moments of $L$-functions},
  Proc. Lond. Math. Soc. 91 (2005) 33--104.

% \bibitem{desh-iwaniec}
% J-M. Deshouillers and H. Iwaniec: \textit{Kloosterman sums and Fourier
% coefficients of cusp forms}, Invent. math. 70 (1982), 220--288.

\bibitem{hardy-riesz} G.H. Hardy and M. Riesz: \textit{The general
    theory of Dirichlet's series}, Cambridge Tracts in Math. 18,
  C.U.P. 1915.

\bibitem{ik}
H. Iwaniec and E. Kowalski: \textit{Analytic number theory},
Colloquium Publ. 53, American Math. Soc. 2004.

\bibitem{families} E. Kowalski: \textit{Families of cusp forms},
  Pub. Math. Besançon, 2013, 5--40.

\bibitem{proba} E. Kowalski: \textit{Arithmetic randonnée: an
    introduction to probabilistic number theory}, ETH Zürich Lecture
  Notes,
  \url{www.math.ethz.ch/~kowalski/probabilistic-number-theory.pdf}.

\bibitem{klsw} E. Kowalski, Y-K. Lau, K. Soundararajan and J. Wu:
  \textit{On modular signs}, Math. Proc. Cambridge Phil. Soc. 149
  (2010), 389--411 \url{doi:10.1017/S030500411000040X}

\bibitem{km} E. Kowalski and Ph. Michel: \textit{The analytic rank of
    $J_0(q)$ and zeros fo automorphic $L$-functions}, Duke
  Math. J. 100 (1999), 503--542.

% \bibitem{kst} E. Kowalski, A. Saha and J. Tsimerman: \textit{A note on
%     Fourier coefficients of Poincar\'e series}, Mathematika 57 (2011),
%   31--40.

\bibitem{kst2} E. Kowalski, A. Saha and J. Tsimerman: \textit{Local
    spectral equidistribution for Siegel modular forms and
    applications}, Compositio Math. 148 (2012), 335--384.

\bibitem{lm} A. Laurin\v cikas and K. Matsumoto: \textit{The
    universality of zeta-functions attached to certain cusp forms},
  Acta Artih. 98 (2001), 345--359.

\bibitem{li-queffelec}
D. Li and H. Queffélec: \textit{Introduction à l'étude des espaces de
  Banach; Analyse et probabilités}, Cours Spécialisés 12, S.M.F,
2004. 

% \bibitem{royer} E. Royer: \textit{Facteurs $\Qq$-simples de
%     $J_0(N)$ de grande dimension et de grand rang},
%   Bull. Soc. Math. France 128 (2000), 219--248.

\bibitem{sst} P. Sarnak, S-W. Shin and N. Templier: \textit{Families
    of $L$-functions and their symmetry}, Proceedings of the 2014
  Simons Trace Formula Symposium, to appear.

% \bibitem{sarnak} P. Sarnak: \textit{Statistical properties of
%     eigenvalues of the Hecke operators}, in ``Analytic Number Theory
%   and Diophantine Problems'' (Stillwater, OK, 1984), Progr. Math. 70,
%   Birkhäuser, 1987, 321--331.

\bibitem{serre}
J-P. Serre: \textit{R\'epartition asymptotique des valeurs propres de
  l'op\'erateur de Hecke $T_p$}, J. American Math. Soc. 10 (1997),
75--102. 

% \bibitem{shahidi}
% F. Shahidi: \textit{Symmetric power $L$-functions for $GL(2)$}, in
% ``Elliptic curves and related topics'', edited by E. Kishilevsky and
% M. Ram Murty, CRM Proc. and Lecture Notes 4, 1994, 159--182.

% \bibitem[S]{shimura}
% G. Shimura: \textit{Introduction to the arithmetic theory of
%   automorphic functions}, Princeton Univ. Press, 1971.

\bibitem{titchmarsh-fn} E.C. Titchmarsh: \textit{The theory of
    functions}, 2nd edition, Oxford Univ. Press, 1939.

\bibitem{voronin} S.M. Voronin: \textit{Theorem on the `universality'
    of the Riemann zeta function}, Izv. Akad. Nauk SSSR, 39 (1975);
  475--486; translation in Math. USSR Izv. 9 (1975), 443--445.

\end{thebibliography}
\end{document}